\title{A derived equivalence between cluster equivalent algebras }
\author{Claire Amiot}
\address{Institut de Recherche Math\'ematique Avanc\'ee, 7 rue Descartes 67084 Strasbourg, France}
\email{amiot@math.unistra.fr}
\thanks{supported by the Storforsk-grant 167130 from the Norwegian Research Council}
\newcommand{\Hom}{{\sf Hom }}
\newcommand{\End}{{\sf End }}
\newcommand{\Ext}{{\sf Ext }}
\renewcommand{\mod}{{\sf mod \hspace{.02in}  }}
\newcommand{\Jac}{{\sf Jac}}
\newcommand{\ind}{{\sf ind \hspace{.02in} }}
\newcommand{\Sub}{{\sf Sub \hspace{.02in} }}
\newcommand{\fl}{{\sf f.l. \hspace{.02in} }}
\newcommand{\Fac}{{\sf Fac \hspace{.02in} }}
\newcommand{\add}{{\sf add \hspace{.02in} }}
\newcommand{\SSS}{\mathbb{S}_2}
\newcommand{\gldim}{{\sf gldim}}
\newcommand{\ten}{\otimes}
\newcommand{\lten}{\overset{\boldmath{L}}{\ten}}
\newcommand{\Cc}{\mathcal{C}}
\newcommand{\Dd}{\mathcal{D}}
\newcommand{\Ii}{\mathcal{I}}
\newcommand{\Jj}{\mathcal{J}}
\newcommand{\Tt}{\mathcal{T}}
\newcommand{\Mm}{\mathcal{M}}
\newcommand{\Ss}{\mathcal{S}}
\newcommand{\ww}{\mathbf{w}}
\newcommand{\bsm}{\begin{smallmatrix}}
\newcommand{\esm}{\end{smallmatrix}}
\newcommand{\leftsub}[2]{{\vphantom{#2}}_{#1}{#2}}
\numberwithin{equation}{section}
\newtheorem{thm}{Th{\'e}or{\`e}me}[section]
\newtheorem{thma}{Theorem}[section]
\newtheorem*{thm*}{Th{\'e}or{\`e}me}
\newtheorem{lema}[thma]{Lemma}
\newtheorem*{lem*}{Lemme}
\newtheorem*{prop*}{Proposition}
\newtheorem{prop}[thma]{Proposition}
\theoremstyle{remark}
\newtheorem{rema}[thm]{Remark}
\theoremstyle{definition}
\newtheorem{dfa}[thma]{Definition}
\newtheorem{construction}[thma]{Construction}
\begin{document}

\maketitle
\begin{abstract}
Let $Q$ be an acyclic quiver. Associated with any element $w$ of the Coxeter group of $Q$, triangulated categories $\underline{\Sub}\Lambda_w$ were introduced in \cite{Bua2}. For any reduced expression $\ww$ of $w$, the categories $\underline{\Sub}\Lambda_w$ are shown to be triangle equivalent to generalized cluster categories $\Cc_{\Gamma_\ww}$ associated to algebras $\Gamma_\ww$ of global dimension $\leq 2$ in \cite{ART}. For $\ww$ satisfying a certain property, called co-$c$-sortable, other algebras $A_\ww$ of global dimension $\leq 2$ are constructed in \cite{Ami3,AIRT} with a triangle equivalence $\Cc_{A_\ww}\simeq \underline{\Sub}\Lambda_w$. The main result of this paper is that the algebras $\Gamma_\ww$ and $A_\ww$ are derived equivalent when $\ww$ is co-$c$-sortable. The proof constructs explicitly a tilting module using the 2-APR-tilting theory introduced in \cite{IO}.
\end{abstract}

\section*{Introduction}
Let $k$ be an algebraically closed field. The cluster category $\Cc_Q$ associated to an acyclic quiver $Q$ has been introduced in \cite{Bua}. It is defined as the orbit category $\Dd^b(kQ)/\SSS$, where $\SSS$ is the composition of the Serre functor $\mathbb{S}$ of the bounded derived category $\Dd^b(kQ)$ of finitely presented $kQ$-modules with the second desuspension $[-2]$. This is a triangulated category \cite{Kel}, with finite dimensional spaces of morphisms ($\Hom$-finite for short), and with the 2-Calabi-Yau property: for any two objects $X$ and $Y$ in $\Cc_Q$ there is a functorial isomorphism
$\Hom_{\Cc_Q}(X,Y)\simeq D\Hom(Y,X[2])$ where $D$ is $\Hom_k(-,k)$. This construction was motivated, via \cite{Mar}, by the theory of cluster algebras initiated by Fomin and Zelevinsky  \cite{FZ1}. Following another point of view, Geiss, Leclerc and Schr\"oer have related in \cite{Gei} and \cite{Gei3} certain cluster algebras with the stable categories $\underline{\mod}\Lambda$ where $\Lambda$ is the preprojective algebra associated with a Dynkin quiver. These categories are also $\Hom$-finite, triangulated and 2-Calabi-Yau. Cluster categories $\Cc_Q$ and stable categories $\underline{\mod}\Lambda$ have both a special kind of objects  called \emph{cluster-tilting}. These are defined to be objects without selfextension and maximal with respect to this property. They are very important since they are the analogs of clusters.

\medskip
Therefore it is interesting to study $\Hom$-finite, 2-Calabi-Yau triangulated categories with cluster-tilting objects in general, and to find new such categories. To an acyclic quiver $Q$ and to an element $w$ of the Coxeter group of $Q$, Buan, Iyama, Reiten and Scott have associated in \cite{Bua2} (see also \cite{Gei2}, \cite{Gei4}) a triangulated category $\underline{\Sub}\Lambda_w$ where $\Sub \Lambda_w$ is a subcategory of the category of finite length modules over the preprojective algebra $\Lambda$ associated with $Q$. These categories are $\Hom$-finite, 2-Calabi-Yau and have cluster-tilting objects. Moreover they generalize the previous categories: If $Q$ is Dynkin and $w$ is the element of maximal length of the Coxeter group, $\underline{\Sub}\Lambda_w$ is equivalent to $\underline{\mod}\Lambda$. For any acyclic $Q$ (which is not $A_n$ with the linear orientation), if $w=cc$ where $c$ is the Coxeter element associated with the orientation of $Q$ then $\underline{\Sub}\Lambda_w$ is equivalent to the cluster category $\Cc_Q$.  

More recently cluster categories have been generalized in \cite{Ami3}, replacing the finite dimensional hereditary algebras $kQ$ by finite dimensional algebras $A$ of global dimension $\leq 2$. The orbit category $\Dd^b(A)/\SSS$ is not triangulated in general. Therefore the \emph{generalized cluster category} $\Cc_A$ is defined to be the triangulated hull of the orbit category $\Dd^b(A)/\SSS$. This construction generalizes again the previous constructions. Indeed in \cite{ART} given an acyclic quiver $Q$ and a reduced expression $\ww$ of an element $w$ in the Coxeter group, an algebra $\Gamma_\ww$ of global dimension $\leq 2$ is constructed with a triangle equivalence $$\Cc_{\Gamma_\ww}\simeq \underline{\Sub}\Lambda_w.$$  The algebra $\Gamma_\ww$ is constructed by using a natural grading on the preprojective algebra $\Lambda$.

With a very different point of view, it is shown in \cite{Ami3, AIRT} that for a certain kind of words called co-$c$-sortable, where $c$ is a Coxeter element, (containing the \emph{adaptable} words $\ww$ in the sense of \cite{Gei4}), it is possible to construct a triangle equivalence $$\Cc_{A_\ww}\simeq \underline{\Sub}\Lambda_w$$ where $A_\ww$ is the Auslander algebra of a finite torsion class in $\mod kQ$ naturally associated with the word $\ww$. 

The aim of this paper is to link the algebras $\Gamma_\ww$ and $A_\ww$ when $\ww$ is co-$c$-sortable. By definition of the generalized cluster category, if two algebras of global dimension $\leq 2$ are derived equivalent their generalized cluster categories are triangle equivalent. However the converse is not true. Two algebras of global dimension $\leq 2$ can have the same generalized cluster categories without being derived equivalent. We say in this case that they are \emph{cluster equivalent}. The main result of this paper is that the algebras $\Gamma_\ww$ and $A_\ww$ are derived equivalent (Theorem \ref{derivedeq}). Moreover we explicitly describe a tilting module yielding this equivalence using $2$-APR-tilting introduced in \cite{IO}.

\medskip
The paper is organized as follows. Section 1 is devoted to background definitions and results from \cite{Ami3}, \cite{Bua2} and \cite{BIRSm}.  We recall the definitions of the generalized cluster categories and of the categories $\Sub\Lambda_w$ and state some of their properties. In section 2 we recall results of \cite{ART} and \cite{AIRT}: We give the explicit construction of the algebra $\Gamma_\ww$ of global dimension $\leq 2$ and we describe the finite torsion class associated to a co-$c$-sortable word $\ww$. Sections 3 and 4 are devoted to the proof of the main theorem. We construct a tilting module over the algebra $A_\ww$ in section 3, and we prove that its endomorphism algebra is isomorphic to $\Gamma_\ww$ in section 4. In section 5 we give an example to illustrate the main theorem.

\subsection*{Acknowledgements}
The author would like to thank Idun Reiten for helpful comments and corrections on this paper, and the Research Council of Norway for financial support. She also would like to thank an anonymous referee for thoroughly reading the paper and useful comments.

\section{Background: Preprojective algebras and generalized cluster categories}

We assume all our algebras to be finite dimensional algebras over an algebraically closed field $k$. All modules are finite dimensional right modules unless otherwise stated, and the composition of arrows in a quiver is from right to left.

\subsection{Generalized cluster categories}

This section is devoted to recalling some results of \cite{Ami3}.

Let $\Gamma$ be a finite dimensional $k$-algebra of global dimension
$\leq 2$. We denote by $\Dd^b(\Gamma)$ the bounded derived category
of finite dimensional right $\Gamma$-modules. It has a Serre functor $-\lten_\Gamma D\Gamma$ that
we denote by $\mathbb{S}$. We denote by $\SSS$ the composition
$\mathbb{S}[-2]$.

The \emph{generalized cluster category} $\Cc_\Gamma$ of $\Gamma$ has been defined in \cite{Ami3} as
the triangulated hull of the orbit category $\Dd^b(\Gamma)/\SSS$. We
will denote by $\pi_\Gamma$ (or $\pi$ if there is no danger of confusion) the triangle functor

$$\xymatrix{\pi_\Gamma:\Dd^b(\Gamma)\ar@{->>}[r] &
  \Dd^b(\Gamma)/\SSS\ar@{^(->}[r] & \Cc_\Gamma}.$$
The case when $\End_{\Cc}(\pi \Gamma)$ is finite dimensional is especially nice since in this case the generalized cluster category contains special objects called \emph{cluster-tilting} (that is objects $T\in \Cc$ satisfying $\add(T)=\{X\in \Cc ,\ \Hom_\Cc(X,T[1])=0\}$ where $\add T$ is the additive closure of $T$). 
\begin{thma}[Theorem 4.10 of \cite{Ami3}]
Let $\Gamma$ be a finite dimensional  algebra of global dimension
$\leq 2$, and assume that the endomorphism algebra $\End_\Cc(\pi \Gamma)$ is finite dimensional.
Then $\Cc_\Gamma$ is a $\Hom$-finite,
triangulated $2$-Calabi-Yau category and $\pi(\Gamma)$ is a cluster-tilting object.
\end{thma}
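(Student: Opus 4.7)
The plan is to follow Keller's approach by realising $\Cc_\Gamma$ as a Verdier quotient of categories over a dg algebra, rather than working directly with the orbit category. Concretely, one attaches to $\Gamma$ its derived $3$-preprojective algebra $\Pi=\Pi_3(\Gamma)$; since $\text{gl.dim}\,\Gamma\leq 2$, one can write $\Pi$ down explicitly from a minimal projective bimodule resolution of $\Gamma$, and it comes out to be homologically smooth, concentrated in non-positive degrees with $H^0(\Pi)\cong\Gamma$, and bimodule $3$-Calabi--Yau by construction. Keller's theory of dg orbit categories then identifies the triangulated hull of $\Dd^b(\Gamma)/\SS$ with the Verdier quotient $\per\Pi/\Dd^{fd}(\Pi)$, where $\Dd^{fd}(\Pi)$ is the full subcategory of the derived category of $\Pi$ whose objects have finite dimensional total cohomology; under this equivalence, $\pi_\Gamma$ becomes the composition of restriction along $\Pi\to\Gamma$ with the quotient functor.

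The first substantive step is to promote the hypothesis $\dim_k\End_{\Cc_\Gamma}(\pi\Gamma)<\infty$ into the geometric statement $\Dd^{fd}(\Pi)\subseteq\per\Pi$. The category $\Dd^{fd}(\Pi)$ is generated as a triangulated category by the simples $S_i$ attached to the vertices, so it suffices to check each $S_i$ lies in $\per\Pi$; a Morita-type computation identifies $\End_{\Cc_\Gamma}(\pi\Gamma)$ with a graded algebra built from $\Hom_{\per\Pi}(\Pi,\Pi[\ast])$, and its finite-dimensionality forces the simples to admit a bounded resolution by finitely generated projectives. Once this containment is established, the general recipe for morphisms in a Verdier quotient yields, for $X,Y$ in the essential image of the canonical fully faithful embedding $\Dd^b(\Gamma)\hookrightarrow\per\Pi$, the formula
$$\Hom_{\Cc_\Gamma}(X,Y)\;\cong\;\bigoplus_{i\in\mathbb{Z}}\Hom_{\Dd^b(\Gamma)}(X,\SS^i Y).$$
Each summand is finite dimensional and only finitely many are nonzero because $\SS=\mathbb{S}[-2]$ eventually pushes $\Dd^b(\Gamma)$ out of any fixed cohomological window (using again $\text{gl.dim}\,\Gamma\leq 2$). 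This gives $\Hom$-finiteness of $\Cc_\Gamma$.

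The $2$-Calabi--Yau property is inherited from the bimodule $3$-Calabi--Yau structure on $\Pi$: there is a bifunctorial duality $D\Hom_{\per\Pi}(X,Y)\cong\Hom_{\per\Pi}(Y,X[3])$ whenever $X$ lies in $\Dd^{fd}(\Pi)$, and the shift by $[1]$ incurred when Hom-spaces in $\per\Pi/\Dd^{fd}(\Pi)$ are computed via triangle completions is exactly what drops the Calabi--Yau dimension from $3$ to $2$. For the cluster-tilting property of $\pi(\Gamma)$, the vanishing of $\Ext^1_{\Cc_\Gamma}(\pi\Gamma,\pi\Gamma)$ follows from the Hom-formula together with $\text{gl.dim}\,\Gamma\leq 2$: the $i=0$ summand vanishes because $\Gamma$ is projective, while for $i\neq 0$ the object $\SS^i\Gamma[1]$ sits outside the range where $\Hom_{\Dd^b(\Gamma)}(\Gamma,-)=H^0(-)$ is nonzero. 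Maximality — that $\Ext^1_{\Cc_\Gamma}(\pi\Gamma,X)=0$ forces $X\in\add\,\pi\Gamma$ — is deduced by using that $\pi\Gamma$ generates $\Cc_\Gamma$ and lifting a minimal $\pi\Gamma$-presentation of $X$ back to $\per\Pi$, where the Ext-vanishing collapses the presentation.

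The step I expect to be the main obstacle is the implication from finite dimensionality of $\End_{\Cc_\Gamma}(\pi\Gamma)$ to $\Dd^{fd}(\Pi)\subseteq\per\Pi$: this is precisely where the hypothesis is used, and it requires a careful analysis of minimal cofibrant resolutions of the simple $\Pi$-modules in terms of the graded pieces of $\Pi$, controlled by the bounded-global-dimension assumption on $\Gamma$. The other technically delicate piece is the identification of the triangulated hull of $\Dd^b(\Gamma)/\SS$ with the dg construction $\per\Pi/\Dd^{fd}(\Pi)$, which rests on Keller's theorem on dg orbit categories and which I would invoke as a black box rather than reprove.
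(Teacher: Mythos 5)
Your overall strategy is the one of the original reference: this paper does not prove the statement but quotes Theorem 4.10 of [Ami3], and there the proof indeed proceeds by forming the derived $3$-preprojective dg algebra $\Pi=\Pi_3(\Gamma)$, identifying the triangulated hull of $\Dd^b(\Gamma)/\SS$ with $\per\Pi/\Dd^{fd}(\Pi)$ via Keller's theorem on dg orbit categories, and exploiting smoothness and the bimodule $3$-Calabi--Yau property. However, your write-up contains a genuine error in how the hypothesis is used. First, $H^0(\Pi)$ is not isomorphic to $\Gamma$ in general: it is the tensor algebra $T_\Gamma(\Ext^2_\Gamma(D\Gamma,\Gamma))$, which is exactly $\End_{\Cc_\Gamma}(\pi\Gamma)\simeq\bigoplus_{i\geq 0}\Hom_{\Dd^b(\Gamma)}(\Gamma,\SS^{-i}\Gamma)$; its finite dimensionality is precisely the hypothesis of the theorem, not an automatic feature of the construction. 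Second, the inclusion $\Dd^{fd}(\Pi)\subseteq\per\Pi$, where you claim the hypothesis is needed, requires no hypothesis at all: it follows from homological smoothness alone (over a smooth dg algebra every module with finite dimensional total cohomology is perfect), and smoothness holds simply because $\Gamma$ has finite global dimension.

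The step that actually fails is your Hom-finiteness argument. It is not true that only finitely many of the summands $\Hom_{\Dd^b(\Gamma)}(X,\SS^iY)$ are nonzero because ``$\SS$ eventually pushes $\Dd^b(\Gamma)$ out of any cohomological window'': since $\mathrm{gl.dim}\,\Gamma\leq 2$, the functor $\SS^{-1}=\mathbb{S}^{-1}[2]$ need not raise cohomological degrees, and for $X=Y=\Gamma$ the summands $\Hom_{\Dd}(\Gamma,\SS^{-i}\Gamma)$ are the weight-$i$ pieces of $H^0(\Pi)$, which are nonzero for infinitely many $i$ whenever the associated Jacobian-type algebra is infinite dimensional. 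If your argument were valid, the theorem would hold without the finiteness assumption on $\End_{\Cc}(\pi\Gamma)$, which is false: when $H^0(\Pi)$ is infinite dimensional, $\Cc_\Gamma$ is not Hom-finite. So the hypothesis must enter exactly at this point. In [Ami3] this is done through the general result on nice dg algebras (homologically smooth, nonpositive, bimodule $3$-CY, $H^0$ finite dimensional): one shows every object of $\per\Pi/\Dd^{fd}(\Pi)$ is the image of an object of a fundamental domain $\Ff\subseteq\per\Pi$, that $\Hom_{\Cc}(\pi X,\pi Y)\simeq\Hom_{\per\Pi}(X,Y)$ for $X,Y\in\Ff$, and that these spaces are finite dimensional using the finite dimensionality of $H^0(\Pi)$ together with smoothness and the Calabi--Yau property; the same machinery gives the $2$-CY duality and the cluster-tilting maximality, which you only sketch. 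As it stands, your proposal never genuinely uses the hypothesis, so it cannot be repaired by local adjustments to the Hom-finiteness paragraph alone.
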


The construction of $\Cc_\Gamma$ depends only on the derived category $\Dd^b(\Gamma)$. 

\begin{prop}\cite[Cor. 7.16]{AO}
 Let $\Gamma$ and $\Gamma'$ be two derived equivalent finite dimensional $k$-algebras of global 
dimension $\leq 2$ and assume that the endomorphism algebra $\End_{\Cc_\Gamma}(\pi \Gamma)$ is finite dimensional. Then the categories $\Cc_\Gamma$ and $\Cc_{\Gamma'}$ are triangle equivalent. 
\end{prop}

However, the converse is not true in general (see Example 5.7 in \cite{AO}). This leads us to state the following definition. 

\begin{dfa}
Two finite dimensional algebras $A$ and $B$ of global dimension $\leq 2$ are called \emph{cluster equivalent} if there exists a triangle equivalence between their generalized cluster categories $\Cc_A$ and $\Cc_B$. 
\end{dfa} 

\subsection{Categories associated to elements in the Coxeter group}

This section is devoted to recalling some results of \cite{Bua2} and \cite{BIRSm}.

Let $Q$ be a finite quiver without oriented cycles. We denote by $Q_0=\{1,\ldots, n\}$ the set of vertices and
 by $Q_1$ the set of arrows. The preprojective algebra associated to $Q$ is the algebra
$$k\overline{Q}/\langle \sum_{a\in Q_1} aa^*-a^*a\rangle$$
where $\overline{Q}$ is the double quiver of $Q$, which is obtained from $Q$ by adding to each arrow 
$a:i\rightarrow j\in Q_1$ an arrow $a^*:i\leftarrow j$ pointing in the opposite direction. 
We denote by $\Lambda$ the completion of the preprojective algebra associated to $Q$ and by $\fl\Lambda$ 
the category of right $\Lambda$-modules of finite length. 

For a vertex $i$ in $Q_0$ we denote by $\Ii_i$ the two-sided ideal $\Lambda(1-e_i)\Lambda$, where $e_i$ is the primitive idempotent of $\Lambda$ associated to the vertex $i$. Let $C_Q$ be the 
\emph{Coxeter group} associated to $Q$. It is defined by the generators $s_i$, where $i\in Q_0$, and by the relations
\begin{itemize}
 \item $s_i^2=1$,
\item $s_is_j=s_js_i$ if there is no arrow between $i$ and $j$,
\item $s_is_js_i=s_js_is_j$ if there is exactly one arrow between $i$ and $j$.
\end{itemize}

A \emph{reduced expression} $\ww=s_{u_1}\ldots s_{u_l}$ of an element $w$ of $C_Q$ is an expression of $w$ with $l$
 as small as possible. When $\ww=s_{u_1}\ldots s_{u_l}$ is  reduced, the integer $l$ is said to be the \emph{length} $l(w)$ of $w$.

Let $\ww=s_{u_1}\ldots s_{u_l}$ be a reduced expression of an element in $C_Q$. For $p\leq l$ we denote by 
$\Ii_{\ww_p}$ the two sided ideal $\Ii_{u_p}\Ii_{u_{p-1}}\ldots\Ii_{u_1}$. We denote by $\Lambda_w$ the algebra 
$\Lambda/\Ii_{\ww_l}$. As shown in \cite{Bua2}, the algebra $\Lambda_w$ depends only on the element $w$ and not on the choice of the reduced expression.  We denote by $\Sub\Lambda_w$ the category of submodules of finite dimensional free $\Lambda_w$-modules.

Let us recall Theorem III.2.8 of \cite{Bua2}.
 
\begin{thma}[Buan-Iyama-Reiten-Scott]\label{birsc}
 The category $\Sub\Lambda_w$ is a $\Hom$-finite Frobenius category and its stable category 
$\underline{\Sub}\Lambda_w$ is $2$-Calabi-Yau. For any reduced expression $\ww$ of $w$, the image $\underline{C}_\ww$ of the object $C_\ww=\bigoplus_{p=1}^le_{u_p}(\Lambda/\Ii_{\ww_p})\in\Sub \Lambda_w$ through the stabilisation $\Sub \Lambda_w\to \underline{\Sub}\Lambda_w$ is a cluster-tilting object in $\underline{\Sub}\Lambda_w$.
\end{thma}

The endomorphism algebra of $C_\ww$ is described in terms of a quiver with relations in \cite{BIRSm}.

Let us define the quiver $Q_\ww$ as follows:
\begin{itemize}
\item vertices: $1,\ldots,l(w)$.
\item for each $i\in Q_0$, one arrow $t\leftarrow s$ if $t$ and $s$ are two consecutive vertices of type $i$ 
(\emph{i.e.} $u_s=u_t=i$) and $t<s$ (we call them \emph{arrows going to the left});
\item for each $a:i\rightarrow j \in Q_1$, put $a:t\rightarrow s$ if $t$ is a vertex of type $i$, 
$s$ of type $j$, and if there is no vertex of type $i$ between $t$ and $s$ and if $s$ is the 
last vertex of type $j$ before the next vertex of type $i$ in the expression
  $\ww=s_{u_1}\ldots s_{u_l}$ (we call them \emph{$Q$-arrows})
\item for each $a:i\rightarrow j\in Q_1$, put $a^*: t\rightarrow
  s$ if $t$ is of type $j$, $s$ is of type $i$, if there is no vertex of type $j$ between $t$ 
and $s$
  and if $s$ is the last vertex of type $i$ before the next vertex of type $j$ in the expression $\ww=s_{u_1}\ldots s_{u_l}$ (we call them \emph{$Q^*$-arrows}).
\end{itemize}

For $i$ in $Q_0$ we define $l_i$ to be the maximal integer such that $u_{l_i}=i$. We 
denote by $Q'_\ww$ the full subquiver of $Q_\ww$ whose vertices are not $l_i$. 

For each $Q$-arrow $a:t\rightarrow s$ in $Q'_\ww$
 we denote by $W_{a}$ the
composition $aa^*p$ if there is a (unique) $Q^*$-arrow
$a^*:r\rightarrow t$ in $Q'_\ww$ where $u_r=u_s$ and where $p$ is the
composition of arrows going to the left $r\leftarrow \cdots \leftarrow s$. Otherwise we put
$W_a=0$. For
each $Q^*$-arrow $a^*:t\rightarrow r$ in $Q'_\ww$, we denote by $W_{a^*}$ the
composition $a^*a p$ if there exists a (unique) $Q$-arrow
$a:s\rightarrow t$ in $Q'_\ww$ with $u_s=u_r$ and where $p$ is the composition of arrows going to the left
$s\leftarrow \cdots \leftarrow r$. Otherwise we put
$W_{a^*}=0$. Then let $W_\ww$ be the sum
$$W_\ww=\sum_{a\ Q\textrm{-arrow}}W_{a}-\sum_{a^*\ Q^*\textrm{-arrow}}W_{a^*}.$$  

It is a \emph{potential} in the sense of \cite{DWZ}, that is, a linear combination of cycles in $Q'_\ww$. For a cycle $p$ in $Q'_\ww$ and an arrow $a$ in $Q'_{\ww}$, it is possible to define the partial derivative $\partial_ap$ as the sum $\partial_ap:=\sum_{p=uav}vu$. The definition of the partial derivative can be extended by linearity to any potential.

The \emph{Jacobian algebra} (see \cite{DWZ}) is defined as the algebra
$$\Jac(Q'_\ww, W_\ww):=kQ'_\ww/\langle \partial_aW_\ww, a\in (Q'_{\ww})_1\rangle.$$

Let us recall Theorem 6.6 of \cite{BIRSm}.

\begin{thma}[Buan-Iyama-Reiten-Smith]\label{birs}
Let $\ww=s_{u_1}\ldots s_{u_l}$ be a reduced expression of an element $w$ of the Coxeter group $C_Q$. Let $\underline{C}_\ww\in \underline{\Sub}\Lambda_w$ be the cluster-tilting object (defined in Theorem \ref{birsc}) associated to this reduced expression. Then there is an algebra isomorphism $$\Jac(Q'_\ww,W_\ww)\simeq \End_{\underline{\Sub}\Lambda_w}(\underline{C}_\ww).$$ 
\end{thma}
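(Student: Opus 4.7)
The plan is to prove the isomorphism by constructing explicit morphisms between the summands $M_p := e_{u_p}(\Lambda/\Ii_{w_p})$ of $C_w$ corresponding to each arrow of $\bar{Q}_w$, and then identifying the relations holding among them with the partial derivatives of $W_w$.

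First I would analyze the structure of $\Sub\Lambda_w$ as a Frobenius category. Using the explicit description of the modules $M_p$ as subquotients of $\Lambda$, I would identify the projective-injective summands of $C_w$ as precisely $\bigoplus_i M_{l_i}$. This explains the passage from $Q_w$ to $\bar{Q}_w$: passing to the stable category kills morphisms factoring through these projective-injective summands, thereby removing the vertices $l_i$.

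Next I would construct a surjection $k\bar{Q}_w \twoheadrightarrow \End_{\underline{\Sub}\Lambda_w}(C_w)$ by sending each of the three classes of arrows of $\bar{Q}_w$ to a natural morphism: a left-going arrow $t \leftarrow s$ to the canonical morphism $M_s \to M_t$ induced by the inclusion $\Ii_{w_s} \subseteq \Ii_{w_t}$ (valid since $u_t = u_s$ and $t < s$); a $Q$-arrow $a : t \to s$ to the morphism induced by multiplication by $a \in Q_1 \subset \overline{Q}$; and a $Q^*$-arrow $a^* : t \to s$ to the one induced by multiplication by $a^* \in \overline{Q}$. The positional conditions in the definitions of $Q$-arrows and $Q^*$-arrows guarantee that these maps generate the space of irreducible morphisms between the $M_p$'s in $\Sub\Lambda_w$ modulo the radical squared; this requires a careful examination of how the ideals $\Ii_{w_p}$ truncate the action of $\overline{Q}$ on each $M_p$.

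Finally, I would identify the relations. The key input is the mesh relation $\sum_{a \in Q_1}(aa^* - a^*a) = 0$ in $\Lambda$, which, after multiplication by $e_{u_t}$ for each vertex $t$ of $\bar{Q}_w$, produces a relation in $\End_{\Sub\Lambda_w}(C_w)$. Interpreting this relation in terms of the irreducible maps above, and composing with the morphisms corresponding to left-going arrows, one recovers exactly the formal partial derivatives $\partial_\alpha W_w$ for each arrow $\alpha$ with target $t$. The main obstacle, and where most of the bookkeeping lies, is matching signs and tracking which $Q^*$-arrow/$Q$-arrow pairs close up via a sequence of left-going arrows to form the cycles $W_a$ and $W_{a^*}$, as dictated by the \emph{last vertex} condition in the definition of these arrows. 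An alternative, more conceptual route is to combine the $2$-Calabi--Yau property of $\underline{\Sub}\Lambda_w$ with general results (\emph{e.g.}~Keller--Reiten) showing that the endomorphism algebra of a cluster-tilting object in such a $2$-CY category is a Jacobian algebra; then only the potential needs to be pinned down, which the irreducible-map identification above makes feasible.
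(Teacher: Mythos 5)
The paper does not prove this statement: Theorem \ref{birs} is recalled verbatim from [BIRSm, Theorem 6.8] (Buan--Iyama--Reiten--Smith) and is used as a black box, so there is no internal proof to compare yours with; the comparison has to be with the original article. Your outline does reproduce the first half of that strategy correctly: the summands $e_{u_{l_i}}(\Lambda/\Ii_{w_{l_i}})\simeq e_i\Lambda_w$ are exactly the projective-injective objects of the Frobenius category $\Sub\Lambda_w$, which explains the passage from $Q_w$ to $\bar{Q}_w$ upon stabilization; the morphisms attached to left-going arrows, $Q$-arrows and $Q^*$-arrows are indeed induced by the surjections $\Lambda/\Ii_{w_s}\rightarrow\Lambda/\Ii_{w_t}$ and by multiplication by arrows of $\overline{Q}$; and the verification that these represent the irreducible maps (i.e.\ that $\bar{Q}_w$ is the Gabriel quiver of $\End_{\underline{\Sub}\Lambda_w}(C_w)$) is the content of [Bua2, III.4]. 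Likewise, the mesh relation $\sum_a(aa^*-a^*a)=0$ in $\Lambda$ does produce the cyclic derivatives $\partial_\alpha W_w$ as relations.

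The genuine gap is the last step. What you describe only yields a surjective algebra morphism from $Jac(\bar{Q}_w,W_w)$ onto $\End_{\underline{\Sub}\Lambda_w}(C_w)$; the substantial point of the theorem is that there are \emph{no further relations}, i.e.\ injectivity, and your sketch does not say how this would be established. In the original proof this is precisely where the $2$-Calabi--Yau property is used in a sharp form: for a $2$-CY-tilted algebra a \emph{minimal} set of relations is indexed by the arrows of the quiver (dual, via the $2$-CY bifunctorial duality, to $\Ext^1$ between the summands, equivalently to $\Ext^2$ between the simples), and one then checks that the cyclic derivatives realize such a minimal set; a vague appeal to $2$-CY does not replace this argument. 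Your proposed shortcut --- ``by Keller--Reiten, the endomorphism algebra of a cluster-tilting object in a $2$-CY category is a Jacobian algebra, so only the potential needs to be identified'' --- is not an available theorem in this generality (Keller--Reiten prove recognition and Gorenstein/$3$-CY-type results, not that every $2$-CY-tilted algebra is Jacobian; even granting it, knowing the quiver and ``a'' potential would not pin down that the specific potential $W_w$ works without exactly the minimality analysis above). A minor additional point: $\Lambda$ is the \emph{completed} preprojective algebra, so in the non-Dynkin case one should work with the completed path algebra of $\bar{Q}_w$ and note that finite-dimensionality of $\End_{\underline{\Sub}\Lambda_w}(C_w)$ makes the completion harmless.
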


\section{Categories associated to a word as generalized cluster categories}

In this section we recall some results of \cite{ART} and \cite{AIRT} which describe some categories $\underline{\Sub}\Lambda_w$ as generalized cluster categories.

\subsection{General words: Results of \cite{ART}}

For any reduced expression $\ww$ of any element $w$ of the Coxeter group, the authors construct in \cite{ART} an algebra $\Gamma_\ww$ of global dimension 2 and an equivalence of triangulated categories $\underline{\Sub}\Lambda_w\simeq \Cc_{\Gamma_\ww}$. We recall here the construction of $\Gamma_\ww$.

Let $Q$ and $\Lambda$ be as in the previous section.
Let $\ww=s_{u_1}\ldots s_{u_l}$ be a reduced expression of an element $w$ in the Coxeter group $C_Q$. 
Since the category $\Sub\Lambda_w$ and the object $C_\ww$ do not depend on the orientation of $Q$, 
we can assume that the orientation of $Q$ satisfies the property
\[\tag{$*$}\quad \textrm{if there exists }i\rightarrow j, \textrm{ then }l_i<l_j,\]
where $l_i$ is the maximal integer such that $u_{l_i}=i$.

We define a grading on the quiver $Q'_\ww$. All arrows going to the left and all $Q$-arrows are defined to have degree 0. All $Q^*$-arrows are defined to have degree 1.
 It is then easy to see that the potential $W_\ww$ 
is homogeneous of degree 1. Hence we get a grading on the Jacobian algebra $\Jac(Q'_\ww,W_\ww)$, and therefore on the algebra  
$\End_{\underline{\Sub}\Lambda_w}(\underline{C}_\ww)$ by Theorem \ref{birs}. We denote by $\Gamma_\ww$ its part of degree zero.

\begin{thma}[Theorem 4.4 of \cite{ART}]\label{art}
 For any acyclic quiver $Q$ and any element $w$ in the Coxeter group of $Q$, the algebra $\Gamma=\Gamma_\ww$ is of global dimension $\leq 2$ and there exists a commutative diagram of triangle functors:
$$\xymatrix{\Dd^b(\Gamma)\ar[rr]^{F_\Gamma}\ar[d]^{\pi_\Gamma} && \underline{\Sub}\Lambda_w \\ 
\Cc_\Gamma \ar[urr]^{f_{\Gamma}}&&},$$
where $\Cc_\Gamma$ is the generalized cluster category associated to $\Gamma$. The functor $f_\Gamma$ is an equivalence, and we have $F_\Gamma(\Gamma)=\underline{C}_\ww$.
\end{thma}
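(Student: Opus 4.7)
The plan is to give a clean presentation of $\Gamma$ from the grading on $\bar Q_w$, verify that $\mathrm{gldim}\,\Gamma \leq 2$, and then recognize $\underline{\Sub}\Lambda_w$ as the generalized cluster category of $\Gamma$ via Keller's description of the $3$-Calabi-Yau completion combined with a recognition theorem for $2$-Calabi-Yau triangulated categories of Keller-Reiten type.

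First, I would extract a presentation of $\Gamma$. Since $W_w$ is homogeneous of degree $1$, the relation $\partial_a W_w$ has degree $1 - \deg a$, so the only homogeneous component of the Jacobian ideal that lies in degree $0$ is generated by $\partial_{a^*} W_w$ for $Q^*$-arrows $a^*$. A short two-sided-ideal calculation (positive-degree factors cannot lower the degree) shows that the higher-degree relations do not cut anything extra out of $B_0$. Writing $\widetilde Q_w$ for the subquiver of $\bar Q_w$ obtained by removing the $Q^*$-arrows, this yields
$$\Gamma \simeq k\widetilde Q_w \,/\, \langle \partial_{a^*} W_w \mid a^* \text{ a } Q^*\text{-arrow of } \bar Q_w\rangle.$$
From this presentation I would build minimal projective resolutions of the simple $\Gamma$-modules $S_t$, with the first syzygy controlled by the arrows of $\widetilde Q_w$ out of $t$ and the second by the relations $\partial_{a^*} W_w$ in which these arrows appear; checking that the third syzygy vanishes gives $\mathrm{gldim}\,\Gamma \leq 2$.

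Next, I would identify $B$ with $\End_{\Cc_\Gamma}(\pi_\Gamma\Gamma)$ using Keller's construction. When $\Gamma$ has global dimension $\leq 2$, Keller proves that $H^0(\Pi_3\Gamma)$ is the Jacobian algebra of a canonical QP $(Q_\Gamma, W_\Gamma)$: the quiver $Q_\Gamma$ is obtained from the Gabriel quiver of $\Gamma$ by adjoining, for each element $r$ of a chosen basis of $\Ext^2_\Gamma(S_i, S_j)$, a new arrow $r^* : j \to i$, and $W_\Gamma = \sum_r r \cdot r^*$. The presentation of $\Gamma$ above identifies such a basis of $\Ext^2$ with the $Q^*$-arrows of $\bar Q_w$, so $Q_\Gamma \simeq \bar Q_w$, and expanding $\sum_{a^*} (\partial_{a^*} W_w)\cdot a^*$ returns $W_w$ up to cyclic equivalence. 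Combined with Amiot's theorem, which provides the isomorphism $\End_{\Cc_\Gamma}(\pi_\Gamma\Gamma) \simeq H^0(\Pi_3\Gamma)$ together with cluster-tiltingness of $\pi_\Gamma(\Gamma)$ inside the Hom-finite $2$-Calabi-Yau category $\Cc_\Gamma$, we obtain $\End_{\Cc_\Gamma}(\pi_\Gamma\Gamma) \simeq B$.

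Finally, both $\Cc_\Gamma$ and $\underline{\Sub}\Lambda_w$ are Hom-finite algebraic $2$-Calabi-Yau triangulated categories, each equipped with a cluster-tilting object ($\pi_\Gamma(\Gamma)$ and $C_w$, respectively) whose endomorphism algebra is $B$. A Keller-Reiten type recognition theorem then produces a triangle equivalence $f_\Gamma: \Cc_\Gamma \xrightarrow{\sim} \underline{\Sub}\Lambda_w$ sending $\pi_\Gamma(\Gamma)$ to $C_w$, and setting $F_\Gamma := f_\Gamma \circ \pi_\Gamma$ gives the required commutative diagram with $F_\Gamma(\Gamma) = C_w$. I expect the main obstacle to lie in the QP identification of the previous step: one must verify that $\Ext^2_\Gamma(S_i,S_j)$ admits a basis naturally indexed by the $Q^*$-arrows from $j$ to $i$, and that Keller's sign conventions correctly reproduce both the $Q$-arrow and the $Q^*$-arrow summands in the definition of $W_w$.
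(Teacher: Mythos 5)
This statement is not proved in the paper at hand: it is Theorem 4.4 of \cite{ART}, recalled here purely as background, so there is no internal proof to compare your sketch with and I can only judge it on its own terms. Your opening steps are sound. Since $W_w$ is homogeneous of degree $1$, the degree-zero part $\Gamma=B_0$ is indeed presented by the quiver obtained from $\bar{Q}_w$ by deleting the $Q^*$-arrows, with relations $\partial_{a^*}W_w$ (this is exactly how Lemma \ref{iso1} of the present paper describes $\Gamma$ in the co-$c$-sortable case), and, granted the minimality of this set of relations that you flag, Keller's description of $H^0$ of the $3$-Calabi-Yau completion as the Jacobian algebra of the canonical quiver with potential of $\Gamma$ would give $\End_{\Cc_\Gamma}(\pi_\Gamma\Gamma)\simeq Jac(\bar{Q}_w,W_w)\simeq\End_{\underline{\Sub}\Lambda_w}(C_w)$.

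The genuine gap is your final step. The recognition theorem of Keller--Reiten \cite{Kel4} applies only when the endomorphism algebra of the cluster-tilting object is hereditary, and there is no ``Keller--Reiten type'' theorem asserting that two Hom-finite algebraic $2$-Calabi-Yau categories with cluster-tilting objects whose endomorphism algebras are isomorphic are triangle equivalent: the triangulated category is governed by the dg (or $A_\infty$) endomorphism algebra of the cluster-tilting object, which is not determined by its zeroth cohomology $B$, and the hereditary case is special precisely because formality can be forced there. (The theme of this very paper --- cluster equivalent algebras need not be derived equivalent --- is a warning about how coarse algebra-level data is in this context.) Accordingly, the argument in \cite{ART} does not pass through an abstract recognition statement applied to $B$: as is recalled at the end of Section 4 above, the functor $F_\Gamma:\Dd^b(\Gamma)\rightarrow\underline{\Sub}\Lambda_w$ is an explicit tensor functor given by a bimodule, built from the grading on the preprojective algebra $\Lambda$ itself (not merely from the induced grading on $B$); one then verifies $F_\Gamma\circ\SS\simeq F_\Gamma$ so that $F_\Gamma$ factors through the triangulated hull $\Cc_\Gamma$, and proves that the induced functor $f_\Gamma$ is an equivalence using that $\Gamma$ has global dimension $\leq 2$ and that $C_w$ is cluster-tilting. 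Without constructing such a functor, or proving a new recognition theorem valid for Jacobian endomorphism algebras, your sketch cannot reach the conclusion; the obstacle is here, not in the $\Ext^2$-basis bookkeeping you single out. A secondary gap: the claim that $\Gamma$ has global dimension $\leq 2$ is asserted (``the third syzygy vanishes'') but not argued; it is not a formal consequence of the presentation, since degree-zero parts of Jacobian algebras with potential homogeneous of degree $1$ can have arbitrarily large global dimension, so this part of the cited theorem also requires a genuine proof.
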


\subsection{Co-$c$-sortable words}
This subsection is devoted to recalling some results of \cite{AIRT}.

Let $Q$ be a quiver without oriented cycles with $n$ vertices. We assume that the orientation of $Q$ satisfies 
\[\tag{$**$} \textrm{if there exists }i\rightarrow j, \textrm{ then }i<j.\]
We denote by $c$ the Coxeter element $s_1\ldots s_n$.

\begin{dfa}
 An element $w$ of the Coxeter group of $Q$ is called \emph{co-c-sortable} if there exists a reduced expression $\ww$ of $w$ of the form $\ww=c^{(m)}\ldots c^{(1)}c^{(0)}$, where all $c^{(t)}$ are subwords of $c$ whose supports satisfy 
$$supp(c^{(m)})\subseteq supp(c^{(m-1)})\subseteq \ldots \subseteq supp(c^{(1)})\subseteq supp(c^{(0)})\subseteq Q_0.$$
If $i\in Q_0$ is in the support of $c^{(t)}$, by abuse of notation, we will write $i\in c^{(t)}$.
\end{dfa}
\begin{rema} \label{remark}
\begin{enumerate}
\item The word $\ww$ is co-$c$-sortable if and only if $\ww^{-1}$ is $c^{-1}$-sortable in the sense of \cite{Rea}. 
\item The co-$c$-sortable expression $\ww$ is unique for a co-$c$-sortable element $w$ (cf \cite{Rea}). 
\item If $\ww$ is co-$c$-sortable, the conditions $(*)$ and $(**)$ for the orientation of $Q$ are the same.
\end{enumerate}
\end{rema}

Let $\ww=c^{(m)}\ldots c^{(1)}c^{(0)}$ be a co-$c$-sortable word. Let $Q^{(1)}$ be the full subquiver of $Q$ whose 
support is the same as $c^{(1)}$. Then the word $\ww'=c^{(m)}\ldots c^{(1)}$ is co-$c^{(1)}$-sortable as an element of the Coxeter group $C_{Q^{(1)}}$.
\begin{construction}\label{constructionT}
 For $t\geq 1$ and $i$ in $c^{(t)}$, we define $kQ^{(1)}$-modules $T_{(i,t)}$ by induction as follows: 
\begin{itemize}
 \item We put $T_{(i,1)}=e_iD(kQ^{(1)})$ for all $i\in c^{(1)}$.
\item For $t\geq 2$, assume that we have defined $T_{(j,s)}$ for $1\leq s\leq t-1$ and $j\in c^{(s)} $, and 
$T_{(n,t)},\ldots, T_{(i+1,t)}$.   Then $T_{(i,t)}$ is defined to be the kernel of the map 
$$f\colon E\rightarrow T_{(i,t-1)}$$  where $f$ is a minimal right
$\add(\bigoplus_{j<i}T_{(j,t-1)}\oplus\bigoplus_{j>i}T_{(j,t)}) $-approximation. 
\end{itemize}
For $i$ in $Q^{(1)}_0$, we define $m_i$ such that $i$ is in $c^{(m_i)}$ but not in $c^{(m_i+1)}$.
We define the $kQ^{(1)}$-module $$T:=\bigoplus_{i\in Q^{(1)}_0} T_{(i,m_i)}.$$
\end{construction}
Here are some results shown in \cite{AIRT}.
\begin{thma}\cite[Thm 3.20]{AIRT}\label{airt1} Let $\ww'$, $Q^{(1)}$ and $T$ be as above. Then the following holds:
\begin{itemize}
\item[(a)] the modules $T_{(i,t)}$ are indecomposable and pairwise non-isomorphic;
\item[(b)] $T$ is a tilting $kQ^{(1)}$-module with finite torsion class;
\item[(c)]  the torsion class $\Fac T=\{ X\in \mod kQ \textrm{ s.t. }
\Ext^1_{kQ^{(1)}}(T,X)=0\}$ is the additive category $\add\{T_{(i,t)},t\geq 1, i\in c^{(t)}\}$;
\item[(d)] the sequences $\xymatrix{0\ar[r]& T_{(i,t)}\ar[r] & E\ar[r]^(.4)f &
    T_{(i,t-1)}\ar[r] & 0}$ are exact and are the almost split sequences
  of $\Fac(T)$.
\end{itemize}

\end{thma}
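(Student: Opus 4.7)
The plan is to deduce the statement by applying the $k$-linear duality $D=\Hom_k(-,k)$ to the corresponding primal theorem of \cite{AIRT}. The construction above starts from the indecomposable injectives $e_iD(kQ^{(1)})$ and proceeds by taking kernels of minimal right approximations, which is term-by-term the $D$-image of the \cite{AIRT} recursion that starts from indecomposable projectives $e_ikQ'$ and builds modules by cokernels of minimal left approximations, where $Q'$ denotes the opposite quiver of $Q^{(1)}$, so that $kQ'\simeq (kQ^{(1)})^{op}$.

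More precisely, I would first fix the duality setup. The functor $D$ gives a contravariant equivalence $\mod kQ^{(1)}\simeq \mod kQ'$ which sends $e_iD(kQ^{(1)})$ to $e_ikQ'$, kernels to cokernels, minimal right $\Xx$-approximations to minimal left $D\Xx$-approximations, and exchanges $\Fac$ with $\Sub$ together with the corresponding $\Ext^1$-vanishing conditions. Using the identification $C_{Q^{(1)}}\simeq C_{Q'}$ of Coxeter groups and the Remark recalled just above the theorem ($w$ is co-$c$-sortable iff $w^{-1}$ is $c^{-1}$-sortable), one checks that the co-$c^{(1)}$-sortable expression $w'=c^{(m)}\cdots c^{(1)}$ corresponds under this identification to the sortable expression in the hypothesis of the \cite{AIRT} theorem. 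Applying $D$ to the modules $T_{(i,t)}$ defined here yields modules satisfying exactly the inductive recipe of \cite{AIRT}, and hence by uniqueness of the construction they agree with the \cite{AIRT} modules over $kQ'$.

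With this dictionary in place, each of the three bullets translates directly. The first, that $T$ is tilting with finite torsion class, is equivalent under $D$ to $DT$ being tilting over $kQ'$ with finite torsion-free class $\Sub(DT)$, which is the \cite{AIRT} conclusion. The second bullet, the equality $\Fac T=\add\{T_{(i,t)}\}$ together with its description via $\Ext^1_{kQ^{(1)}}(T,-)=0$, dualises to $\Sub(DT)=\add\{DT_{(i,t)}\}$ characterised by $\Ext^1_{kQ'}(-,DT)=0$. The third bullet follows from the fact that $D$ preserves short exact sequences (reversing their direction) and transports almost split sequences of $\Sub(DT)$ to almost split sequences of $\Fac(T)$.

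The only genuine work, and the main obstacle I expect, is the combinatorial bookkeeping. One must verify carefully that the ordering on the approximation category, namely $\add\bigl(\bigoplus_{j<i}T_{(j,t-1)}\oplus\bigoplus_{j>i}T_{(j,t)}\bigr)$, and the layering index $t$ used here correspond under the Coxeter group isomorphism and the reversal $w\mapsto w^{-1}$ to the ordering and indexing employed in \cite{AIRT}. Orientation hypothesis $(**)$ on $Q$ is what makes this correspondence clean, and the rest of the argument is then a straightforward unpacking of definitions.
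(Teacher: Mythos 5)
Your proposal is correct and coincides with the paper's own treatment: the paper gives no independent argument here, stating the theorem precisely as ``the dual version of results of \cite{AIRT}'', which is the reduction you carry out via the duality $D$ (injectives to projectives, kernels to cokernels, minimal right to minimal left approximations, $\Fac T$ to $\Sub DT$, and co-$c$-sortable words over $Q^{(1)}$ to sortable words for the inverse Coxeter element of the opposite quiver). The combinatorial bookkeeping you flag is the only part left implicit in the paper as well, and the orientation hypothesis $(**)$ is exactly what makes that dictionary clean.
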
 
We have also the following result which is a generalization of Theorem 5.21 of \cite{Ami3}.
\begin{thma}\cite[Thm 3.23]{AIRT}\label{airt2}
 Let $Q$, $\ww=c^{(m)}\ldots c^{(0)}$, $\ww'=c^{(m)}\ldots c^{(1)}$, $Q^{(1)}$ and  $T_{(i,t)}$ (for $t\geq 1$ and $i\in c^{(t)}$) be as above. Define the endomorphism algebra 
$$A:=\End_{kQ^{(1)}}(\bigoplus_{t\geq 1}\bigoplus_{i\in c^{(t)}} T_{(i,t)}).$$ Then the algebra $A$ is of global dimension $\leq 2$ 
and there exists a commutative diagram of triangle functors:
$$\xymatrix{\Dd^b(A)\ar[rr]^{F_A}\ar[d]^{\pi_A} && \underline{\Sub}\Lambda_w \\ 
\Cc_A \ar[urr]^{f_{A}}&&},$$
where $\Cc_A$ is the generalized cluster category associated with the algebra $A$. The functor $f_A$ is an equivalence, and we have $F_A(A)=\underline{C}_\ww$.
\end{thma}

\subsection{Main result}
From now on we assume that $\ww=c^{(m)}\ldots c^{(1)}c^{(0)}$ is a co-$c$-sortable word. Combining Theorems \ref{art} and \ref{airt2}, 
we get two algebras $A$ and $\Gamma$ of global dimension $\leq 2$, with the following diagram

$$\xymatrix{\Dd^b(A)\ar@{..>}[rr]^{?}\ar[d]_{\pi_A}\ar[dr]^{F_A} &  & \Dd^b(\Gamma)\ar[d]^{\pi_\Gamma}\ar[dl]_{F_\Gamma}\\ 
 \Cc_A\ar[r]^(.4)\sim& \underline{\Sub}\Lambda_w &\Cc_\Gamma\ar[l]_(.4)\sim}.$$

As we have seen in the first section, we do not automatically get a derived equivalence between $A$ and $\Gamma$.
The aim of this paper is to prove that there is an equivalence in this case and that 
this derived equivalence is given by a tilting module which is easy to describe.
More precisely we will show the following in the next two sections.

\begin{thma}\label{derivedeq}
Let $\ww=c^{(m)}\ldots c^{(0)}$ be a co-$c$-sortable element in the Coxeter group of $Q$. For $t\geq 1$ and $i$ in $c^{(t)}$, define $kQ^{(1)}$-modules $T_{(i,t)}$ as in Construction~\ref{constructionT}.  Let $A=\End_{kQ^{(1)}}(\bigoplus_{t\geq 1}\bigoplus_{i\in c^{(t)}} T_{(i,t)})$ be the algebra as in Theorem~\ref{airt2}. And let $\Gamma$ be the degree zero part of the graded algebra $\Jac(Q'_\ww,W_\ww)$ as defined in subsection 2.1.

For $p\geq 1$ and $j\in c^{(p)}$ define the indecomposable projective $A$-module 
$$P_{(j,p)}:=\Hom_{kQ^{(1)}}(\bigoplus_{t\geq 1}\bigoplus_{i\in c^{(t)}} T_{(i,t)}, T_{(j,p)}),$$ 
and the complex
$$M=\bigoplus_{p=1}^m\bigoplus_{j\in
  c^{(p)}}\SSS^{-p+1}(P_{(j,p)})\in \Dd^b(A),$$
where $\SSS$ is the autoequivalence $\mathbb{S}[-2]$ of $\Dd^b(A)$.
Then the following holds:
\begin{enumerate}
\item $M$ is a tilting module;
\item $\End_A(M)\simeq\Gamma$;
\item  the functor $R\Hom_A(M,-)$ makes the following diagram commute
$$\xymatrix{\Dd^b(A)\ar[rr]^{R\Hom_A(M,-)}\ar[dr]_{F_A} &  & \Dd^b(\Gamma)\ar[dl]^{F_\Gamma}\\ 
 & \underline{\Sub}\Lambda_w &}.$$
\end{enumerate}
\end{thma}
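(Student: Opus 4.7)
The plan is to realize $M$ as the outcome of an iterated sequence of $2$-APR-tilts starting from $A$, which gives part~(1), then to identify the resulting endomorphism algebra with $\Gamma$ through the graded presentation of $B=\End_{\underline{\Sub}\Lambda_w}(C_w)$ recalled in Section~2.

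Before proving anything, observe that since $\SS$ becomes the identity after passing to the orbit category, $\pi_A(\SS^{-p+1}P_{(j,p)})=\pi_A(P_{(j,p)})$, hence $\pi_A(M)=\pi_A(A)$ in $\Cc_A$ and $F_A(M)\simeq C_w$. Once parts~(1) and~(2) are established, Rickard's theorem produces a triangle equivalence $R\Hom_A(M,-)\colon\Dd^b(A)\to\Dd^b(\Gamma)$ sending $M$ to $\Gamma$; both $F_A$ and $F_\Gamma\circ R\Hom_A(M,-)$ are then triangle functors out of $\Dd^b(A)$ sending the generator $M$ to the same object $C_w$, and the commutativity in~(3) follows from the universal property of the generalized cluster category used in Theorems~\ref{art} and~\ref{airt}.

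For part~(1), set
$$N_q:=\bigoplus_{p=1}^{q}\bigoplus_{j\in c^{(p)}}\SS^{-p+1}P_{(j,p)}\ \oplus\ \bigoplus_{p=q+1}^{m}\bigoplus_{j\in c^{(p)}}\SS^{-q}P_{(j,p)}\qquad(0\le q\le m),$$
so that $N_0=A$ and $N_m=M$. I would prove by induction on $q$ that $N_q$ is a tilting $A$-module, with $N_{q+1}$ obtained from $N_q$ by applying $\SS^{-1}$ to the summands with $p\ge q+2$ simultaneously. The passage $N_q\to N_{q+1}$ should be realized as a $2$-APR-tilt of the endomorphism algebra $\End_A(N_q)$ in the sense of \cite{IO}, with the summands $\{\SS^{-q}P_{(j,p)}:p\ge q+2\}$ forming a complete set of simple sources in its quiver. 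The combinatorial input from the co-$c$-sortable hypothesis is that every $Q^*$-arrow of $\bar{Q}_w$ connects a vertex of higher level to a vertex of strictly lower level, so after $q$ tilts the vertices of level $\ge q+2$ remain sources of the relevant subquiver and the projective-dimension hypotheses of \cite{IO} are automatic.

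For part~(2), once $M$ is tilting, $\End_A(M)$ has global dimension $\le 2$ and embeds canonically in $B=\End_{\Cc_A}(\pi_AM)=\End_{\Cc_A}(\pi_AA)$; the $(j,p),(j',q)$-block of this embedding is $\Hom_{\Dd^b(A)}(P_{(j,p)},\SS^{p-q}P_{(j',q)})$. Using the quiver-with-potential presentation of $B$ (Theorem~\ref{birs}) together with the grading of \cite{ART} (with $Q$-arrows and left-arrows in degree $0$ and $Q^*$-arrows in degree $1$), I would check arrow by arrow that the image of this embedding coincides with the subalgebra $\Gamma=B_0$: the $Q$-arrows and left-arrows lift to genuine morphisms between the corresponding summands of $M$ because they respect levels appropriately, while the $Q^*$-arrows do not. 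The main obstacle throughout is the inductive step in part~(1)---verifying the $2$-APR source condition at each intermediate stage---which is essentially a bookkeeping exercise on the evolving quiver of $\End_A(N_q)$, and is where the co-$c$-sortable hypothesis enters in earnest.
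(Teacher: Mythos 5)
Your reduction of part (1) to an iterated sequence of $2$-APR-tilts has the same skeleton as the paper's proof (your intermediate objects $N_q$ are exactly the tilting modules $T_i$ appearing in its Proposition \ref{OsamuSteffen2}), but the step you dismiss as automatic bookkeeping is precisely where the real work lies, and as stated it has a gap. At each stage one must verify the two hypotheses of Theorem \ref{OsamuSteffen}: $\Hom(Q,P)=0$ and $\Ext^{-1}(Q,\SS^{-1}P)=0$. A ``complete set of simple sources in the quiver of $\End_A(N_q)$'' only addresses the first, and you have no a priori control of that quiver, since it changes after every tilt. The second condition --- and the stronger fact, needed for the assertion that $M$ is a tilting \emph{module} rather than merely a tilting complex, that $\SS^{-j}P_{(i,s)}$ is a module for $0\le j\le s-1$ --- is nowhere established in your argument and does not follow from any combinatorics of $\bar{Q}_w$. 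The paper proves it by an explicit computation in $\mod\Mm$, namely $\SS^{-n}(\Mm(-,X))\simeq \Mm(-,\tau^{-n}X)/[\Tt_n]$ (Lemma \ref{Phiproj}), using the almost split sequences of the finite torsion class $\Mm$ and the Auslander--Reiten formula; the Hom-vanishing is then reduced via Lemma \ref{space} to $\Hom_{kQ^{(1)}}(DkQ^{(1)},\tau^{r}DkQ^{(1)})=0$ for $r\ge 1$. Note also that the paper formulates Proposition \ref{OsamuSteffen2} so that \emph{all} conditions are checked once and for all inside $\Dd^b(A)$ and then transported along the equivalences $R\Hom(T_i,-)$, exactly to avoid tracking the evolving quiver of $\End_A(N_q)$. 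Without a substitute for Lemma \ref{Phiproj}, your induction does not get off the ground.

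For part (2) you propose a genuinely different route --- embedding $\End_A(M)$ into $B=\End_{\Cc_A}(\pi_A A)$ and matching it with the degree-zero part for the grading of \cite{ART} (this is the graded point of view of \cite{AO}) --- but the decisive point is missing: your blocks $\Hom_{\Dd^b(A)}(P_{(j,p)},\SS^{p-q}P_{(j',q)})$ single out graded pieces for the grading of $B$ by powers of $\SS$, and you must show that this grading agrees with the grading by arrow degrees used to define $\Gamma=B_0$. Checking that the degree-zero arrows lift to morphisms between summands of $M$ while the $Q^*$-arrows do not only yields $B_0\subseteq \mathrm{im}$; the reverse inclusion is not an ``arrow by arrow'' check. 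The paper instead presents both algebras by the quiver $R_{w'}$ with relations $\Jj_{w'}$ (Lemma \ref{iso1}) and constructs an explicit isomorphism $G$ (Proposition \ref{iso}), whose surjectivity requires an induction on the size of commutative squares and whose injectivity requires a factorization argument, both again resting on Lemmas \ref{space} and \ref{Phiproj}. Finally, in part (3), two triangle functors that agree on a single (tilting) object need not be isomorphic; the paper's argument uses that $F_A$, $F_\Gamma$ and $R\Hom_A(M,-)$ are tensor functors given by bimodules, and your appeal to a ``universal property'' should be replaced by that observation.
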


\begin{rema} Note that we have equivalences $$ \gldim A=1 \  \Leftrightarrow \  m=1 \ \Leftrightarrow \ \gldim\Gamma =1 \Leftrightarrow \ A\simeq \Gamma.$$ Therefore we can assume from now that $m\geq 2$. \end{rema}

\section{Construction of a tilting module}

This section is devoted to the proof of Theorem \ref{derivedeq}(1). 
 We start in subsection 3.1 with preliminaries on module categories over Auslander algebras. Then we state some general lemmas on tilting modules over hereditary algebras in subsection 3.2. In subsection 3.3 we describe explicitely the repeated action of $\SSS$ on indecomposable projective $A$-modules. Finally we recall results on 2-APR-tilting of \cite{IO} in subsection 3.4 and prove Theorem \ref{derivedeq}(1) in subsection 3.5. 

\subsection{Module categories over Auslander algebras}
Let $\Mm$ be an additive $k$-category with finite dimensional $\Hom$-spaces and with finitely many indecomposables up to isomorphism. We denote by $\ind \Mm$ a set of  representative of each isomorphism class of indecomposables in $\Mm$. Let  $A$ be the \emph{Auslander algebra} of $\Mm$, that is, the endomorphism algebra  $A=\End(\bigoplus_{X\in \ind \Mm}X)$. This is a finite-dimensional basic algebra. Denote by $\mod A$ the category of finite dimensional right $A$-modules and by $\mod \Mm$ the category of finitely presented functors $\Mm^{op}\rightarrow \mod k$.
Then the functor 
$$\Hom( \bigoplus_{X\in \ind Mm}X,-): \mod \Mm\rightarrow \mod A$$ is an equivalence of category.  Through this equivalence,  indecomposable projective $A$-modules are isomorphic to the functors 
of the form $\Mm(-,X)$ where $X$ is an indecomposable object in $\Mm$ and indecomposable injective $A$-modules
are isomorphic to the functors of the form $D\Mm(X,-)$ where $X$ is an indecomposable object in $\Mm$.

For $\Tt$ a full subcategory of $\Mm$ and $X$ an object of $\Mm$, we define the $\Mm$-module $\Mm(-,X)/[\Tt]$ as the cokernel of 
$$\Mm(-,T)\rightarrow \Mm(-,X)$$ induced by a minimal right $\Tt$-approximation $T\rightarrow X$.

We first state a lemma which describes morphisms between objects in $\mod \Mm$ of the form $\Mm(-,X)/[\Tt]$ 
in terms of morphisms in $\Mm$.

\begin{lema}\label{space}
Let $\Mm$ be an additive $k$-category with finitely many
indecomposables. Let $\Tt$
and $\Ss$ be full subcategories of $\Mm$, then we have an isomorphism
for any $X$ and $Y$ in $\Mm$ between
$$\Hom_{\mod \Mm}(\Mm(-,X)/[\Tt],\Mm(-,Y)/[\Ss])$$ and the space of commutative
squares up to homotopy $$\xymatrix{T\ar[r]^t\ar[d] & X\ar[d]
  \\S\ar[r]^s & Y}$$ where $t$ (resp. $s$) is a minimal right $\Tt$(resp. $\Ss$)
-approximation in $\Mm$.
\end{lema}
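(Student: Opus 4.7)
The plan is to apply the left exact functor $\Hom_A(-,\Mm(-,Y)/[\Ss])$ to the natural projective presentation of $\Mm(-,X)/[\Tt]$ in $\mod A\simeq\mod\Mm$, and to combine this with Yoneda's lemma. Since $t:T\to X$ is a right $\Tt$-approximation, the image of $t_*:\Mm(-,T)\to\Mm(-,X)$ is exactly the subfunctor $[\Tt](-,X)\subseteq\Mm(-,X)$ of morphisms factoring through $\add\Tt$, so we have a right exact sequence
$$\Mm(-,T)\xrightarrow{t_*}\Mm(-,X)\to\Mm(-,X)/[\Tt]\to 0,$$
which is a projective presentation of the target. The analogous presentation exists with $(S,s,Y,\Ss)$ in place of $(T,t,X,\Tt)$.

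Applying $\Hom_A(-,\Mm(-,Y)/[\Ss])$ to the above sequence and using the Yoneda identification $\Hom_A(\Mm(-,M),F)\simeq F(M)$ for any $A$-module $F$ and any $M\in\Mm$, I obtain the exact sequence
$$0\to\Hom_A(\Mm(-,X)/[\Tt],\Mm(-,Y)/[\Ss])\to\Mm(X,Y)/[\Ss](X,Y)\xrightarrow{(-)\circ t}\Mm(T,Y)/[\Ss](T,Y).$$
Thus the left-hand $\Hom$-space is identified with the set of classes $[f]\in\Mm(X,Y)/[\Ss](X,Y)$ for which $ft$ lies in $[\Ss](T,Y)$, i.e. for which $ft$ factors through $\add\Ss$. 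By the approximation property of $s:S\to Y$, this is equivalent to the existence of some $g:T\to S$ with $ft=sg$, i.e. to the existence of a commutative square with horizontal arrows $t$ and $s$ and top arrow $f$.

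It remains to match the quotient by $[\Ss](X,Y)$ with the chain-homotopy relation on squares. Replacing $f$ by $f+sh$ for $h:X\to S$ does not alter the class in $\Mm(X,Y)/[\Ss](X,Y)$; correspondingly, $(f+sh,g+ht)$ is again a commutative square, since $(f+sh)t=ft+sht=sg+sht=s(g+ht)$, and this is exactly the standard chain homotopy on two-term complexes. Sending a square $(f,g)$ to the induced morphism of cokernels, and composing with the identifications above, yields the desired bijection.

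The step I expect to require the most care is the injectivity bookkeeping: if the top arrow of a square satisfies $f=sh$, then subtracting the homotopy $(sh,ht)$ reduces $(f,g)$ to a pair of the form $(0,g')$ with $sg'=0$, and one must verify that such a pair is itself a chain-homotopy boundary, using the approximation property of $s$ (and minimality) to absorb the residual ambiguity in $g$ inside $\ker s_*$. Once this is handled, the conclusion follows formally from Yoneda and left exactness.
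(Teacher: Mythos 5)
Your main computation is correct and follows essentially the same route as the paper: both arguments start from the projective presentation $\Mm(-,T)\rightarrow\Mm(-,X)\rightarrow\Mm(-,X)/[\Tt]\rightarrow 0$ induced by the approximation $t$ and conclude by Yoneda; you merely make the comparison explicit by applying $\Hom_A(-,\Mm(-,Y)/[\Ss])$ and using left exactness, which identifies $\Hom_A(\Mm(-,X)/[\Tt],\Mm(-,Y)/[\Ss])$ with $\{f\in\Mm(X,Y) \textrm{ such that } ft\in[\Ss](T,Y)\}/[\Ss](X,Y)$, i.e.\ with the possible top maps of commutative squares over $s$, taken modulo maps factoring through $s$. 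Up to that point everything is fine.

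The only issue is the final step you flag, and it is a problem of interpretation rather than a missing argument: the statement you propose to verify --- that a commutative square $(0,g')$ with $sg'=0$ is a chain-homotopy boundary in the strict sense, i.e.\ $g'=ht$ for some $h$ with $sh=0$ --- is false in general, and minimality will not rescue it. For instance, take $\Mm=\mod k(1\rightarrow 2)$, let $E$ be the indecomposable of length two with socle $U$ and top $V$, and put $X=E$, $\Tt=\add U$ (so $t\colon U\hookrightarrow E$), $Y=V$, $\Ss=\add E$ (so $s\colon E\twoheadrightarrow V$). The square with top $0$ and bottom $g'=t\colon U\rightarrow E$ commutes and is not strictly null-homotopic (a homotopy $h$ would be a nonzero multiple of $\mathrm{id}_E$, forcing $sh\neq 0$), yet $\Hom_A(\Mm(-,E)/[\Tt],\Mm(-,V)/[\Ss])=0$, these two modules being non-isomorphic simples; so squares modulo strict two-term chain homotopy can be strictly larger than the $\Hom$-space. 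Fortunately the lemma does not require this: the relation ``up to homotopy'' as it is used in the paper (see how it is applied in Step 1 of the proof of Proposition \ref{iso}) only asks whether the top map factors through the approximation $s$, i.e.\ lies in $[\Ss](X,Y)$; equivalently, two chain maps between the two presentations induce the same morphism of cokernels if and only if their degree-zero components differ by a map factoring through $\Mm(-,s)$, the bottom components carrying no further information. With that reading, your exact-sequence computation already completes the proof, and the ``injectivity bookkeeping'' you were worried about disappears.
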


\begin{proof}
Let $t:T\rightarrow X$ be a minimal $\Tt$- approximation of $X$. Then the
projective presentation of the module $\Mm(-,X)/[\Tt]$ is 
$$\xymatrix{\Mm(-,T)\ar[rr]^{\Mm(-,t)} && \Mm(-,X)\ar[r] & \Mm(-,X)/[\Tt]\ar[r] &
  0}$$
Thus the space of morphisms $\Hom_A(\Mm(-,X)/[\Tt], \Mm(-,Y)/[\Ss])$
is isomorphic to the space of commutative squares 
 $$\xymatrix{\Mm(-,T)\ar[rr]^{\Mm(-,t)}\ar[d] & &\Mm(-,X)\ar[d]
  \\\Mm(-,S)\ar[rr]^{\Mm(-,s)} && \Mm(-,Y)}$$
up to homotopy, where $s:S\rightarrow Y$ is a minimal right $\Ss$-approximation. By the Yoneda lemma, this is isomorphic to the space
of commutative squares
 $$\xymatrix{T\ar[r]^t\ar[d] & X\ar[d]
  \\S\ar[r]^s & Y}$$ up to homotopy.

\end{proof}

\subsection{The category $\Fac(T)$}

In the rest of the section, we assume that $\ww=c^{(m)}\ldots c^{(0)}$ is a co-$c$-sortable word with $m\geq 2$. For $t\geq 1$ and $i\in c^{(t)}$, we define $kQ^{(1)}$-modules $T_{(i,t)}$ and $T:=\bigoplus_{i\in c^{(1)}}T_{(i,m_i)}$, where $m_i$ is the maximal integer such that $i\in c^{(m_i)}$ as in Construction~\ref{constructionT}. By Theorem~\ref{airt1}, it is a tilting module and we have  
$$\Fac(T)=\{X\in \mod kQ^{(1)},\ \Ext^1_{kQ^{(1)}}(T,X)=0\}=\add\{T_{(i,t)}, 1\leq t\leq m, i\in c^{(t)}\}.$$ 
Define $A:= \End_{kQ^{(1)}}(\bigoplus_{t\geq 1, i\in c^{(t)}} T_{(i,t)})$ and $\Mm=\Fac(T)$. Since the $T_{(i,t)}$ are indecomposable and pairwise non isomorphic (Theorem \ref{airt1} (a)), $A$ is the Auslander algebra of $\Mm$.

The category $\Mm$, as a torsion class, has almost split sequences (cf Theorem \ref{airt1}(d)). 
We will denote by $\tau$ its Auslander-Reiten translation (which is a functor by \cite[Section 3]{AS}). By Theorem \ref{airt1}(d), for any $1\leq t\leq m-1$ and $i\in c^{(t)}$ we have $\tau T_{(i,t)}=T_{(i,t+1)}$ if $i\in c^{(t+1)}$ and $0$ else. Therefore by Construction~\ref{constructionT} we have $T_{(i,t)}\simeq \tau^{t-1} (e_i D kQ^{(1)})$ and since $\Mm=\add\{T_{(i,t)}, 1\leq t\leq m, i\in c^{(t)}\}$ any indecomposable object in $\Mm$ is in the $\tau$-orbit of a direct factor of $DkQ^{(1)}$.

The following lemma is classical from tilting theory, we include here the proof for the convenience of the reader.

\begin{lema}\label{lemme approx}
Let $X$ be an indecomposable object in $\Fac (T)$. Then there exists a short exact sequence $\xymatrix{0\ar[r]& T_1\ar[r]& T_0\ar[r]^f & X\ar[r] & 0}$ where $f$ is a minimal right $\add(T)$-approximation and $T_1,T_0\in \add(T)$.
\end{lema}

\begin{proof}
Let $f:T_0\rightarrow X$ be a minimal right $\add(T)$-approximation. It is surjective since $\Fac(T)$ coincide with the set of modules generated by $T$. Then form the exact sequence \begin{equation}\label{suite exacte}\xymatrix{0\ar[r]& T_1\ar[r]& T_0\ar[r]^f & X\ar[r] & 0}.\end{equation} It induces a long exact sequence
$$\xymatrix@-.5cm{0\ar[r] & \Hom_{kQ^{(1)}}(-,T_1)\ar[r] & \Hom_{kQ^{(1)}}(-,T_0)\ar[r]^{f^*} &\Hom_{kQ^{(1)}}(-,X)\ar[r] & \Ext^1_{kQ^{(1)}}(-,T_1)\ar[r] & \Ext^1_{kQ^{(1)}}(-,T_0) }.$$
Since $f$ is an $\add T$ approximation the map $f^*_{|_{\add T}}$ is surjective. Moreover $\Ext^1_{kQ^{(1)}}(T,T_0)$ vanishes. Hence $\Ext^1_{kQ^{(1)}}(T,T_1)$ vanishes.

From the sequence \ref{suite exacte} we also obtain 
$$\xymatrix{0=\Ext^1_{kQ^{(1)}}(T_0,T)\ar[r] & \Ext^1_{kQ^{(1)}}(T_1,T)\ar[r] & \Ext^2_{kQ^{(1)}}(X,T)=0}.$$
Since $T$ is a tilting $kQ^{(1)}$-module, from  $\Ext^1_{kQ^{(1)}}(T,T_1)=\Ext^1_{kQ^{(1)}}(T_1,T)=0$, we deduce $T_1\in \add T$.

\end{proof}

\begin{lema}\label{ARformula}
Let $X,Y\in \Mm$ such that $\tau^{-1} X\neq 0$. Then we have a functorial isomorphism
$$D\Ext^1_{kQ^{(1)}}(\tau^{-1}X,Y)\simeq \Mm(Y,X).$$
\end{lema}

\begin{proof}
The category $\Mm$ is functorially finite and extension closed. It has an Auslander-Reiten formula by \cite{EMM}, that is for any $X$ and $Y$ in $\Mm$ we have a functorial isomorphism
 $$D\Ext^1_{kQ^{(1)}}(\tau^{-1}X,Y)\simeq \Mm(Y,X)/[\add DkQ^{(1)}].$$
 A morphism $\Mm(Y,X)$ factors through an object in $\add DkQ^{(1)}$ if and only if $X\in \add DkQ^{(1)}$ since $DkQ^{(1)}$ is a slice. But if $\tau^{-1}X\neq 0$ then $X$ is not in $\add DkQ^{(1)}$, thus we get the result.
\end{proof}
\subsection{Action of $\SSS$ on projective $A$-modules}

For $t\geq 1$ we define the subcategory $\Tt_t$ of $\Mm$ as
$$\Tt_t:=\add( T\oplus\tau^{-1}
T\oplus \cdots \oplus \tau^{-t+1}T),$$ and by convention $\Tt_0=\emptyset$. Note that  $T_{(i,t)}=\tau^{t-m_i} T_{(i,m_i)}\in \Tt_{m_i-t+1}$, thus we have $\Tt_{m+1}=\Mm$.

The following lemma  is a variant of Lemma 5.9 of
\cite{Ami3} and will be very useful in the rest of the paper.

\begin{lema}\label{Phiproj}

Let $X$ be an object of $\Mm$ and $n\geq 0$ such that $\tau^{-n}X\neq 0$, where $\tau$ is the AR-translation of the category $\Mm$. Then we have an isomorphism in $\mod \Mm$:
$$\SSS^{-n}(\Mm(-,X))\simeq \Mm(-,\tau^{-n}X)/[\Tt_{n}]$$
\end{lema}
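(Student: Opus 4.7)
The plan is to argue by induction on $n \geq 0$, following the strategy of Lemma~5.9 of \cite{Ami3}. The base case $n = 0$ is trivial: $\SS^0 = \mathrm{id}$ and by convention $\Tt_0 = \emptyset$, so both sides equal $\Mm(-,X)$.

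The heart of the argument is the case $n=1$: showing $\SS^{-1}(\Mm(-,X)) \simeq \Mm(-, \tau^{-1}X)/[\add T]$. The hypothesis $\tau^{-1}X \neq 0$ in $\Mm$ means that $X$ is not projective as an object of $\Mm$, so there is an almost split sequence
$$0 \to X \to E \to \tau^{-1}X \to 0$$
in $\Mm$. Applying the Yoneda functor yields a four-term exact sequence
$$0 \to \Mm(-,X) \to \Mm(-,E) \to \Mm(-,\tau^{-1}X) \to S_X \to 0$$
which is a minimal projective resolution of the simple $\mod\Mm$-module $S_X$ at $X$; its length $\leq 2$ is consistent with $\mathrm{gl.dim}\,A\leq 2$. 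To identify $\SS^{-1}(\Mm(-,X))$, I would apply Serre duality: $\mathbb{S}$ converts the above projective resolution of $S_X$ into an injective coresolution of $\mathbb{S}(S_X)$, and a triangle computation in $\Dd^b(A)$ identifies $\SS^{-1}(\Mm(-,X)) = \mathbb{S}^{-1}(\Mm(-,X))[2]$ with the cokernel of the map $\Mm(-,T_0) \to \Mm(-,\tau^{-1}X)$ induced by a minimal right $\add T$-approximation $T_0 \to \tau^{-1}X$. This cokernel is precisely $\Mm(-,\tau^{-1}X)/[\add T] = \Mm(-,\tau^{-1}X)/[\Tt_1]$.

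For the inductive step from $n-1$ to $n$, set $Y=\tau^{-(n-1)}X$, so by the induction hypothesis $\SS^{-(n-1)}(\Mm(-,X)) \simeq \Mm(-,Y)/[\Tt_{n-1}]$, and $\tau^{-1}Y = \tau^{-n}X \neq 0$. A projective presentation of $\Mm(-,Y)/[\Tt_{n-1}]$ arises from a right $\Tt_{n-1}$-approximation $T_Y \to Y$. Applying $\SS^{-1}$ to the corresponding triangle in $\Dd^b(A)$ and invoking the case-$n=1$ formula on $\Mm(-,Y)$ and on each indecomposable summand of $\Mm(-,T_Y)$ (each of which has a nonzero $\tau^{-1}$-translate, since they lie in $\Tt_{n-1} \subset \Mm$ and are not Ext-projective in $\Mm$), one assembles a presentation of $\SS^{-n}(\Mm(-,X))$. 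Using Lemma~\ref{space} to match morphisms and the identity $\Tt_n = \add T \sqcup \tau^{-1}\Tt_{n-1}$, the assembled presentation is exactly that of $\Mm(-,\tau^{-n}X)/[\Tt_n]$. The main obstacle will be the $n=1$ computation, where the Serre-duality identification of $\SS^{-1}$ on an indecomposable projective $A$-module with the cokernel of an $\add T$-approximation must be performed carefully, keeping track of the interplay between the Auslander algebra structure of $A$ and the AR-theory of the torsion class $\Mm$.
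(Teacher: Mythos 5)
Your overall plan --- induction on $n$, with the identification $\SS^{-1}(\Mm(-,X))\simeq\Mm(-,\tau^{-1}X)/[\Tt_1]$ as the crucial case --- is the same as the paper's, but that crucial case is exactly what you do not prove: you yourself defer the ``Serre-duality identification'' as the main obstacle to be ``performed carefully'', and the sketch you give for it does not contain an argument. You start from the almost split sequence $0\to X\to E\to\tau^{-1}X\to 0$ and the induced projective resolution of a simple functor (which, incidentally, is the simple supported at $\tau^{-1}X$, not at $X$), but this resolution is never used again, and no link is made between it and the asserted identification of $\SS^{-1}(\Mm(-,X))$ with the cokernel of $\Mm(-,T_0)\to\Mm(-,\tau^{-1}X)$. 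The paper's proof of this step rests on two ingredients absent from your proposal: (i) since $\tau^{-1}X\in\Mm=\Fac T$ with $kQ^{(1)}$ hereditary and the Ext-projectives of $\Mm$ equal to $\add T$, the minimal right $\add T$-approximation sits in an exact sequence $0\to T_1\to T_0\to\tau^{-1}X\to 0$ with \emph{both} $T_0$ and $T_1$ in $\add T$, so that $\Mm(-,\tau^{-1}X)/[\Tt_1]$ has a length-two projective resolution to which the Nakayama functor can be applied termwise; and (ii) the cohomology of the resulting complex $D\Mm(T_1,-)\to D\Mm(T_0,-)\to D\Mm(\tau^{-1}X,-)$ is identified via the long exact sequence of the approximation sequence, using $\Ext^1_{kQ^{(1)}}(T_0,\Mm)=0$ and the Auslander--Reiten formula $D\Ext^1(\tau^{-1}X,-)|_{\Mm}\simeq\Mm(-,X)$; this is what produces $\Mm(-,X)$ (the paper in fact computes $\SS$ of the right-hand side rather than $\SS^{-1}$ of the left). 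Without (i) and (ii), ``a triangle computation in $\Dd^b(A)$'' is an assertion, not a proof, and the almost split sequence you begin with plays no role in reaching the stated conclusion.

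There is also a concrete error in your inductive step: you justify applying the $n=1$ formula to every indecomposable summand of the $\Tt_{n-1}$-approximation $T_Y\to Y$ by saying these summands ``are not Ext-projective in $\Mm$'', hence have nonzero $\tau^{-1}$-translate. This conflates the two notions: in $\Mm$ one has $\tau^{-1}Z=0$ exactly when $Z$ is Ext-\emph{injective} (i.e.\ $Z\simeq e_iD(kQ^{(1)})$), while the Ext-projectives of $\Mm$ are precisely $\add T\subseteq\Tt_{n-1}$, so many summands of $T_Y$ are Ext-projective and your stated reason is simply false. (The nonvanishing you need can be rescued: by the hereditary Auslander--Reiten formula, $\Hom(e_jD(kQ^{(1)}),\tau^{t-1}e_iD(kQ^{(1)}))\simeq D\Ext^1_{kQ^{(1)}}(\tau^{t-2}e_iD(kQ^{(1)}),e_jD(kQ^{(1)}))=0$ for $t\geq 2$, so Ext-injective summands cannot occur in a minimal approximation of such a $Y$ --- but no such argument appears in your text.) Note also that the paper runs the induction in the opposite order, which avoids both this point and the question of whether your two-term $\Tt_{n-1}$-presentation has projective kernel: it applies $\SS^{-1}$ first via the fixed length-two $\add T$-resolution from the case $n=1$, then applies the induction hypothesis termwise to that complex of projectives (checking $\tau^{-n+1}T_0,\tau^{-n+1}T_1\neq 0$), and finally identifies the resulting complex with $\Mm(-,\tau^{-n}X)/[\Tt_n]$.
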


\begin{proof}
We prove this lemma by induction on $n$.

Let $X$ be an indecomposable in $\Mm$ with $\tau^{-1}X\neq 0$. Since $\tau^{-1}X$ is in $\Mm$ we have a short exact
sequence by Lemma~\ref{lemme approx}
\begin{equation}\label{approx}
\xymatrix{0\ar[r] & T_1\ar[r] & T_0\ar[r]^f& \tau^{-1}X\ar[r] &
0}\end{equation}
with $T_0$ and $T_1$ in $\Tt_1=\add(T)$ and $f$ a minimal right $\add(T)$-approximation. The objects $T_0$ and $T_1$ are not zero since $X$ is not zero and in $\Mm$.
Thus we get an exact sequence in $\mod \Mm$
$$\xymatrix{0\ar[r] & \Mm(-,T_1)\ar[r] & \Mm(-,T_0)\ar[r] &
  \Mm(-,\tau^{-1}X)\ar[r] & \Mm(-,\tau^{-1}X)/[\Tt_1]\ar[r] & 0}$$
which gives a projective resolution of the module
$\Mm(-,\tau^{-1}X)/[\Tt_1]$.
Therefore the object $$\SSS( \Mm(-,\tau^{-1}X)/[\Tt_1])$$ is isomorphic in $\Dd^b(\mod \Mm)$ to the
complex
$$\xymatrix@-.2cm{ D\Mm(T_1,-)\ar[r] & D\Mm(T_0,-)\ar[r] &
  D\Mm(\tau^{-1}X,-)},$$ where $D\Mm(T_1,-)$ is in degree 0.
From the short exact sequence (\ref{approx}) we get a long exact sequence in
$\mod \Mm$:
$$\xymatrix@-.5cm{ D\Ext^1_{kQ^{(1)}}(T_0,-)_{|_{\Mm}}\ar[r] &
  D\Ext^1_{kQ^{(1)}}(\tau^{-1}X,-)_{|_\Mm}\ar[r]  &D\Mm(T_1,-)\ar[r] & D\Mm(T_0,-)\ar[r] &
  D\Mm(\tau^{-1}X,-)\ar[r] & 0}.$$
Since $\Mm=\Fac(T)$, we have $\Ext^1_{kQ^{(1)}}(T_0,\Mm)=0$.
By Lemma~\ref{ARformula} we have an isomorphism in $\mod \Mm$
$$D\Ext^1_{\Mm}(\tau^{-1}X,-)\simeq\Mm(-,X).$$ 
Hence we get the desired isomorphism
$$\SSS^{-1}(\Mm(-,X))\simeq\Mm(-,\tau^{-1}X)/[\Tt_1],$$ which is the assertion for $n=1$.

Now let $n\geq 2$ and assume that for any $Y$ with $\tau^{-n+1}Y\neq 0$ we
have $$\SSS^{-n+1}(\Mm(-,Y))\simeq \Mm(-,\tau^{-n+1}Y)/[\Tt_{n-1}].$$
Let $X$ be in $\Mm$ such that $\tau^{-n}X$ is not zero. Then by the assertion for $n=1$, we have an isomorphism
\begin{equation}\label{ison=1}\SSS^{-n}(\Mm(-,X))\simeq\SSS^{-n+1}(\Mm(-,\tau^{-1}X)/[\Tt_1]).\end{equation}

By the above short exact sequence (\ref{approx}), we obtain that $\Mm(-,\tau^{-1}X)/[\Tt_1]$ is isomorphic in $\Dd^b(\mod \Mm)$ to the complex 
$$\xymatrix{  \Mm(-,T_1)\ar[r] & \Mm(-,T_0)\ar[r] & \Mm(-,\tau^{-1}X) }.$$ 
Applying the induction hypothesis  for $Y=T_0, T_1,\tau^{-1}X$ we obtain the isomorphisms
\begin{align*}\SSS^{-n+1}(\Mm(-,T_1)) &\simeq \Mm(-,\tau^{-n+1}T_1)/[\Tt_{n-1}]\\ \SSS^{-n+1}(\Mm(-,T_0)) &\simeq \Mm(-,\tau^{-n+1}T_0)/[\Tt_{n-1}]\\ \SSS^{-n+1}(\Mm(-,X)) &\simeq \Mm(-,\tau^{-n+1}X)/[\Tt_{n-1}].
\end{align*}
Hence $\SSS^{-n+1}(\Mm(-,\tau^{-1}X)/[\Tt_1])$ is isomorphic in $\Dd^b(\mod \Mm)$ to the complex 
 \begin{equation}\label{iso2}
\xymatrix{\Mm(-,\tau^{-n+1}T_1)/[\Tt_{n-1}]\ar[r]
    &\Mm(-,\tau^{-n+1}T_0)/[\Tt_{n-1}]\ar[r]
      &\Mm(-,\tau^{-n}X)/[\Tt_{n-1}] }\end{equation}

Since $\tau^{-n}X$ is not zero, 
the short exact sequence (\ref{approx})
yields  a short exact
sequence $$\xymatrix{0\ar[r] & \tau^{-n+1}T_1\ar[r] &
  \tau^{-n+1}T_0\ar[r] & \tau^{-n}X\ar[r] & 0}.$$
  The objects $\tau^{-n+1}T_0$
and $\tau^{-n+1}T_1$ cannot be zero since $X,\tau^{-1}X,\ldots \tau^{-n}X$ are not zero.
As above, we obtain an exact sequence  in $\mod \Mm$ 
$$\xymatrix{ 0\ar[r]& \Mm(-,\tau^{-n+1}T_1)\ar[r] & \Mm(-,\tau^{-n+1}T_0)\ar[r] & \Mm(-,\tau^{-n}X)\ar[r] & \Mm(-,\tau^{-n}X)/[\add \tau^{-n+1}T]\ar[r] & 0}.$$
Dividing by the ideal $[\Tt_{n-1}]$ we obtain an exact sequence
$$\xymatrix@-.4cm{ 0\ar[r]& \Mm(-,\tau^{-n+1}T_1)/[\Tt_{n-1}]\ar[r] & \Mm(-,\tau^{-n+1}T_0)/[\Tt_{n-1}]\ar[r] & \Mm(-,\tau^{-n}X)/[\Tt_{n-1}]\ar[r] &  \Mm(-,\tau^{-n}X)/[\Tt_n]\ar[r] & 0},$$
that is $\Mm(-,\tau^{-n}X)/[\Tt_n]$ is isomorphic in $\Dd^b(\mod \Mm)$ to the complex
\begin{equation}\label{iso3}\xymatrix{  \Mm(-,\tau^{-n+1}T_1)/[\Tt_{n-1}]\ar[r] & \Mm(-,\tau^{-n+1}T_0)/[\Tt_{n-1}]\ar[r] & \Mm(-,\tau^{-n}X)/[\Tt_{n-1}]}.\end{equation}
Combining the isomorphisms~\eqref{ison=1},\eqref{iso2} and \eqref{iso3}, we obtain the isomorphisms $$\Mm(-,\tau^{-n}X)/[\Tt_{n}]\simeq \SSS^{-n+1}(\Mm(-,\tau^{-1}X)/[\Tt_1])\simeq \SSS^{-n}(\Mm(-,X)).$$
This finishes the induction.
\end{proof}

\subsection{2-APR-tilting}
The object $M$ of Theorem \ref{derivedeq} is constructed by applying powers of the functor $\SSS$ to summands of $A$. To prove that it is tilting, we use the tool of 2-APR-tilting introduced by Iyama and Oppermann. The following result is  Theorem~4.5 together with Proposition~4.7 of \cite{IO}.

\begin{thma}[Iyama-Oppermann]\label{OsamuSteffen}
Let $B$ be a finite dimensional $k$-algebra of global
dimension at most $2$. Suppose we can decompose $B=P\oplus Q$ as a $B$-module
in such a way that
\begin{enumerate}
\item $\Hom_B(Q,P)=0$;
\item $\Ext^{-1}_{\Dd^b(B)}(Q,\SSS^{-1}(P))=0$.
\end{enumerate}
Then $T=\SSS^{-1}(P)\oplus Q$ is a tilting complex over $B$ and
$\End_B(T)$ is of global dimension $\leq 2$.
\end{thma}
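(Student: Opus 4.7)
The plan is to verify the two defining conditions of a tilting complex for $T=\SS^{-1}P\oplus Q$ -- the self-Ext vanishing $\Hom_{\Dd^b(\Gamma)}(T,T[i])=0$ for $i\neq 0$, and generation of $\per\Gamma$ -- and then to deduce the bound on the global dimension of $\End_\Gamma(T)$.

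For the self-Ext vanishing I would decompose $\Hom(T,T[i])$ into four bifunctorial pieces. The two diagonal pieces vanish for $i\neq 0$ because $P$ and $Q$ are projective and $\SS$ is an autoequivalence, so $\Hom(\SS^{-1}P,\SS^{-1}P[i])\simeq\Ext^i_\Gamma(P,P)=0$ and $\Hom(Q,Q[i])=\Ext^i_\Gamma(Q,Q)=0$. For the cross-term $\Hom(\SS^{-1}P,Q[i])$, Serre duality together with the identity $\mathbb{S}(\SS^{-1}P)=P[2]$ gives an isomorphism with $D\Ext^{2-i}_\Gamma(Q,P)$; this vanishes by projectivity of $Q$ for $i\leq 1$, trivially for $i>2$, and by hypothesis~(1) exactly in the critical case $i=2$. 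For the symmetric cross-term $\Hom(Q,\SS^{-1}P[i])$, the projectivity of $Q$ identifies it with $\Hom_\Gamma(Q,H^i(\SS^{-1}P))$, whose cohomology is supported in degrees $[-2,0]$ by the global dimension bound via the identification $H^\bullet(\SS^{-1}P)=\Ext^{\bullet+2}_\Gamma(D\Gamma,P)$; hypothesis~(2) then kills the dangerous $i=-1$ term, and the boundary cases $i=0$ and $i=-2$ are disposed of by a combinatorial check using the minimal projective coresolution of $P$ together with hypothesis~(1).

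For generation, the global dimension bound gives $\per\Gamma=\Dd^b(\Gamma)$, and $\Gamma=P\oplus Q$ already generates; one only needs to produce $P$ inside the thick subcategory generated by $T$. I would construct an exact triangle $\SS^{-1}P\to R\to P\to\SS^{-1}P[1]$ in $\Dd^b(\Gamma)$ with $R$ in the thick closure of $\add Q$, derived from the minimal projective resolution of the $\Gamma$-bimodule $D\Gamma$ applied to $P$; hypothesis~(1) is exactly what forces the intermediate projective summands to lie in $\add Q$ rather than having contributions from $\add P$.

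Once $T$ is established as a tilting complex, the derived equivalence $R\Hom_\Gamma(T,-)\colon\Dd^b(\Gamma)\to\Dd^b(B)$ with $B=\End_\Gamma(T)$ is standard, and the global dimension bound follows from the observation that $T\in\Dd^{[-2,0]}(\Gamma)$: this forces $R\Hom_\Gamma(T,M)\in\Dd^{[0,2]}(B)$ for every $M\in\mod\Gamma$, whence every simple $B$-module has projective dimension at most $2$ and $B$ has global dimension at most $2$. The main obstacle is the fine dovetailing required in the self-Ext step: hypotheses~(1) and~(2) together with Serre duality must combine to kill every off-diagonal $\Ext$ term outside degree $0$, and the elegance of the Iyama--Oppermann statement lies precisely in the fact that these two conditions are exactly what the 2-Calabi--Yau analogue of classical APR-tilting demands.
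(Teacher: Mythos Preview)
The paper does not prove this theorem at all: it is quoted verbatim as ``Theorem 4.5 with Proposition 4.7 of \cite{IO}'' and used as a black box in the proof of Proposition~\ref{OsamuSteffen2}. So there is no argument in the paper to compare your outline against.

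That said, your sketch is broadly the right shape for how one would prove such a statement, but a few of the steps you describe as routine are not. In the cross-term $\Hom(Q,\SS^{-1}P[i])$ you correctly locate the cohomology of $\SS^{-1}P$ in degrees $[-2,0]$ and invoke hypothesis~(2) for $i=-1$, but the case $i=-2$, namely $\Hom_\Gamma(Q,\Hom_\Gamma(D\Gamma,P))$, is not obviously killed by hypothesis~(1) alone; in the Iyama--Oppermann paper this is handled via their standing assumptions on the decomposition (in their setting $P$ is required to satisfy an additional injectivity-type condition, or equivalently the decomposition is compatible with a slice in a higher Auslander--Reiten sense). Likewise, your generation argument --- producing a triangle $\SS^{-1}P\to R\to P\to$ with $R\in\mathsf{thick}(Q)$ from the bimodule resolution of $D\Gamma$ --- is plausible but the claim that hypothesis~(1) forces all middle terms into $\add Q$ needs the specific structure of that resolution, not just $\Hom(Q,P)=0$. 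Finally, your global-dimension bound via $T\in\Dd^{[-2,0]}(\Gamma)$ is not quite enough: $R\Hom_\Gamma(T,M)$ for $M\in\mod\Gamma$ can a priori have cohomology in $[0,2+\mathrm{gl.dim}\,\Gamma]$, not just $[0,2]$; the actual argument in \cite{IO} uses a more careful analysis of the induced $t$-structure. None of these are fatal, but each requires input beyond what you have written.
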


Applying recursively this theorem, we will prove the following.

\begin{prop}\label{OsamuSteffen2}
Let $B$ be  a finite dimensional $k$-algebra of global dimension
$\leq 2$. Suppose that we can decompose $B$ as the sum
$P_m\oplus\cdots \oplus P_1\oplus P_0$ of $B$-modules in such a
way that 
\begin{itemize}
\item[(a)]
for any $s,t,j$ such that $s-j-t\geq 1$ and $j\geq 0$
the space $\Hom_{\Dd^b(B)}(P_t,\SSS^{-j} P_s)$ vanishes;
\item[(b)] $\SSS^{-j}P_s$ is a module for $0\leq j\leq s$.
\end{itemize}
Then $T=\SSS^{-m}P_m\oplus\ldots \SSS^{-1}P_1\oplus P_0$ is a tilting
module  and the algebra
$\End_B(T)$ is of global dimension at most $2$.
\end{prop}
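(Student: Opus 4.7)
The plan is to prove the proposition by induction on $m$, stripping off one layer of the filtration at each step via Theorem \ref{OsamuSteffen}. The case $m = 0$ is trivial, since $T = P_0 = \Gamma$ with $\End_\Gamma(T) = \Gamma$ of global dimension $\leq 2$ by assumption.

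For the inductive step I would first apply Theorem \ref{OsamuSteffen} to $\Gamma$ with the splitting $P := P_m$ and $Q := P_{m-1} \oplus \cdots \oplus P_0$. Its first hypothesis $\Hom_\Gamma(Q, P) = 0$ is the specialization of (1) to $s = m$, $j = 0$, $t \leq m - 1$; its second hypothesis $\Ext^{-1}_\Gamma(Q, \SS^{-1}P) = 0$ holds automatically, since $\SS^{-1}P_m$ is a module by (2), so $\SS^{-1}P_m[-1]$ sits in cohomological degree $+1$ and receives no map from the module $Q$. The theorem produces a tilting complex $T^{(1)} := \SS^{-1}P_m \oplus Q$ with endomorphism algebra $\Gamma^{(1)}$ of global dimension $\leq 2$; write $F := R\Hom_\Gamma(T^{(1)}, -) \colon \Dd^b(\Gamma) \xrightarrow{\sim} \Dd^b(\Gamma^{(1)})$ for the resulting derived equivalence, which commutes with $\SS$.

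Next I would regroup the projective summands of $\Gamma^{(1)}$ into $m$ blocks by setting $\widetilde P_{m-1} := F(\SS^{-1}P_m) \oplus F(P_{m-1})$ and $\widetilde P_i := F(P_i)$ for $0 \leq i \leq m - 2$, and verify that $\Gamma^{(1)}$ with this decomposition satisfies the hypotheses of the proposition at parameter $m - 1$. Transporting through $F$, condition (1) at a triple $(s, t, j)$ follows either from the original (1) at $(s, t, j)$, via the $F(P_{m-1})$ part of $\widetilde P_{m-1}$ when $s = m - 1$, or from the original (1) at $(m, t, j+1)$, via the $F(\SS^{-1}P_m)$ part, with the inequality $s - j - t \geq 1$ matching in both cases. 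Condition (2) reduces to $\Ext^i_\Gamma(T^{(1)}, \SS^{-j}P_s) = 0$ for $i \neq 0$ and the relevant $(s, j)$; the projective summands $P_t$ of $T^{(1)}$ contribute nothing since the target is a module, while the summand $\SS^{-1}P_m$ gives $\Hom_\Gamma(P_m, \SS^{-(j-1)}P_s[i])$, which vanishes for $i \neq 0$ either because $\SS^{-(j-1)}P_s$ is already a module by (2) when $j \geq 1$ or, in the boundary case $j = 0$, via Serre duality $\Ext^2 \simeq D\Hom_\Gamma(P_s, P_m) = 0$ by (1). The induction hypothesis then yields a tilting module $\widetilde T := \bigoplus_{p = 0}^{m-1} \SS^{-p}\widetilde P_p$ over $\Gamma^{(1)}$ with endomorphism algebra of global dimension $\leq 2$; applying $F^{-1}$ and using $F \SS \simeq \SS F$, I recover $T = \bigoplus_{p = 0}^m \SS^{-p} P_p$, which is an actual module by (2) (each summand $\SS^{-p}P_p$ is a module) and whose endomorphism algebra coincides with $\End_{\Gamma^{(1)}}(\widetilde T)$.

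The main technical obstacle is the verification of condition (2) for the new algebra $\Gamma^{(1)}$: one must keep track of which shifted $\Gamma$-modules remain concentrated in cohomological degree zero after transport by $F$. The most delicate point is the $\Ext^2$ contribution in the boundary case $j = 0$, where Serre duality is needed to turn the potential obstruction into a Hom already killed by condition (1).
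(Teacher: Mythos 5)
Your proof is correct, and it organizes the induction differently from the paper. The paper keeps everything over the original algebra: it proves by induction on $i$ that $T_i=\SS^{-i}(P_m\oplus\cdots\oplus P_i)\oplus\SS^{-i+1}P_{i-1}\oplus\cdots\oplus P_0$ is a tilting module over $\Gamma$ with $\Gamma_i=\End_\Gamma(T_i)$ of global dimension $\leq 2$, and at each step applies Theorem \ref{OsamuSteffen} to $\Gamma_i$ with $P$ and $Q$ the images under $F_i=R\Hom_\Gamma(T_i,-)$ of $\SS^{-i}(P_m\oplus\cdots\oplus P_{i+1})$ and of $\SS^{-i}P_i\oplus\cdots\oplus P_0$; since $F_i$ commutes with $\SS$, both hypotheses of that theorem are checked by transporting back to $\Gamma$, where they are immediate from (1) and (2), so the hypotheses of the Proposition never have to be re-established for a tilted algebra. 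You instead induct on $m$, pass to $\Gamma^{(1)}=\End_\Gamma(\SS^{-1}P_m\oplus P_{m-1}\oplus\cdots\oplus P_0)$, merge $F(\SS^{-1}P_m)$ with $F(P_{m-1})$ into one block, and re-verify (1) and (2) for the regrouped decomposition; your index bookkeeping for (1) is right (the case where the merged block is the source is vacuous since $s-j-t\geq 1$ cannot hold there), and your verification of (2), including the Serre-duality step $\Hom_{\Dd^b(\Gamma)}(P_m,\mathbb{S}P_s)\simeq D\Hom_\Gamma(P_s,P_m)=0$ from (1), is valid. What each approach buys: the paper's formulation makes every step's hypothesis check trivial at the cost of a slightly heavier induction statement about intermediate tilting complexes, while yours is a self-contained structural recursion on the Proposition itself at the cost of proving that the hypotheses are stable under one tilting step. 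One small simplification available to you: the $j=0$ instance of condition (2) over $\Gamma^{(1)}$ needs no argument at all, since $\widetilde P_s$ is by construction a direct summand of $\Gamma^{(1)}=F(T^{(1)})$, hence a projective $\Gamma^{(1)}$-module; so your Serre-duality computation, though correct, can be bypassed.
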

 
\begin{proof}
We prove by induction on $i\geq 0$ that the object $$T_i=\SSS^{-i}(P_m\oplus \cdots \oplus P_i)\oplus
\SSS^{-i+1}P_{i-1}\oplus \cdots \oplus \SSS^{-1}P_1 \oplus P_0$$
is a tilting module over $B$ and that the endomorphism algebra
$B_i:=\End_{B}(T_i)$ has
global dimension $\leq 2$. This holds for $i=0$ by hypothesis. Suppose that this holds for an $i\geq 0$.
The functor $F_i=R\Hom_B(T_i,-)$ yields a
triangle equivalence 
$$\xymatrix{F_i:\Dd^b(B)\ar@(dl,dr)_{\SSS=-\lten_BDB[-2]}\ar[rr]^\sim
  &&\Dd^b(B_i)\ar@(dl,dr)_{\leftsub{i}\SSS=-\lten_{B_i}DB_i[-2]}}$$ which sends $T_i$ to $B_i$. By the uniqueness of the
Serre functor we have an isomorphism 
$$F_i\circ \SSS=R\Hom_B(T_i,-\lten_B DB[-2])\simeq
R\Hom_B(T_i,-)\lten_{B_i}DB_i[-2]=\leftsub{i}\SSS\circ F_i.$$ We want to apply
Theorem \ref{OsamuSteffen} to $$P:=F_i(\SSS^{-i}(P_m\oplus\cdots \oplus P_{i+1}))\textrm{ and } Q:=F_i(\SSS^{-i}(P_i)\oplus\cdots\oplus \SSS^{-1}(P_1)\oplus P_0).$$ 
We have $$\begin{array}{rcl}
\Hom_{\Dd^b(B_i)}(Q,P) & = & \Hom_{\Dd^b(B_i)}( F_i(\SSS^{-i}(P_i)\oplus\cdots\oplus \SSS^{-1}(P_1)\oplus P_0),F_i(\SSS^{-i}(P_m\oplus\cdots\oplus P_{i+1})))\\ & \simeq &\Hom_{\Dd^b(B)}(\SSS^{-i}(P_i)\oplus\cdots\oplus \SSS^{-1}(P_1)\oplus P_0, \SSS^{-i}(P_m\oplus\cdots \oplus P_{i+1}))\\ & = & 0\textrm{ by (1).}\end{array}$$
Moreover, we have
$$\begin{array}{rcl} \Ext^{-1}_{\Dd^b(B_i)}(Q,\leftsub{i}\SSS^{-1}P) & = & \Ext^{-1}_{\Dd^b(B_i)}( F_i(\SSS^{-i}(P_i)\oplus\cdots\oplus \SSS^{-1}(P_1)\oplus P_0),\leftsub{i}\SSS^{-1}F_i(\SSS^{-i}(P_m\oplus\cdots \oplus P_{i+1})))\\ & \simeq & \Ext^{-1}_{\Dd^b(B_i)}(F_i(\SSS^{-i}(P_i)\oplus\cdots\oplus \SSS^{-1}(P_1)\oplus P_0),F_i(\SSS^{-(i+1)}(P_m\oplus\cdots \oplus P_{i+1})))\\ 
& \simeq & \Ext^{-1}_{\Dd^b(B)}(\SSS^{-i}(P_i)\oplus\cdots\oplus \SSS^{-1}(P_1)\oplus P_0,\SSS^{-(i+1)}(P_m\oplus\cdots \oplus P_{i+1}))\end{array}$$
By (b), for $1\leq j\leq s$ the object $\SSS^{-j}P_s$ is a module, hence the space $\Ext^{-1}_B(P_l, \SSS^{-j}P_s)$ vanishes for any $l$. Therefore the space $\Ext^{-1}_{\Dd^b(B_i)}(Q,\leftsub{i}\SSS^{-1}P)$ vanishes. 

Thus by Theorem \ref{OsamuSteffen}, $\leftsub{i}\SSS^{-1}(P)\oplus Q\simeq F_i(T_{i+1})$ is a tilting complex in $\Dd^b(B_i)$. Therefore $T_{i+1}$ is a tilting complex in $\Dd^b(B)$. It is a module by $(2)$, and its endomorphism algebra $B_{i+1}=\End_B(T_{i+1})\simeq \End_{B_i}(F_i(T_{i+1}))$ is of global dimension $\leq 2$. Thus we get the proposition.
\end{proof}

\subsection{Application to our setup}

In this subsection, we apply Proposition~\ref{OsamuSteffen2} for $B=A=\bigoplus_{t=1}^m (\bigoplus_{i\in c^{(t)}}P_{(i,t)})$ where $P_{(i,t)}:=\Mm(-,T_{(i,t)})$ is the projective indecomposable $A$-module defined in Theorem \ref{derivedeq}.

\begin{prop}\label{tilting}
The complex
$M=\bigoplus_{t=1}^m\bigoplus_{i\in
  c^{(t)}}\SSS^{-t+1}(P_{(i,t)})$
is a tilting $A$-module.
\end{prop}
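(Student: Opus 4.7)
The plan is to apply Proposition \ref{OsamuSteffen2} to the projective decomposition $A=\bigoplus_{t=1}^{m}P^{(t)}$, where $P^{(t)}:=\bigoplus_{i\in c^{(t)}}P_{(i,t)}$; after reindexing so that the exponent of $\SS$ runs from $0$ to $m-1$ (matching the indices $0,\ldots,m$ of Proposition \ref{OsamuSteffen2}), the resulting tilting module $\bigoplus_{t=1}^{m}\SS^{-(t-1)}P^{(t)}$ is exactly $M$.

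Condition (2) of Proposition \ref{OsamuSteffen2} is read off directly from Lemma \ref{Phiproj}: since $\tau^{-j}T_{(k,s)}=T_{(k,s-j)}$ is non-zero for every $j\leq s-1$, the complex $\SS^{-j}P_{(k,s)}\simeq \Mm(-,T_{(k,s-j)})/[\Tt_j]$ is already a genuine $A$-module in the required range $0\leq j\leq s-1$.

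The substantive step is condition (1), namely the vanishing of $\Hom_A(P_{(i,t)},\SS^{-j}P_{(k,s)})$ whenever $i\in c^{(t)}$, $k\in c^{(s)}$, $j\geq 0$ and $s-t-j\geq 1$. Combining Lemmas \ref{space} and \ref{Phiproj}, this Hom space is a quotient of $\Hom_\Mm(T_{(i,t)},T_{(k,s-j)})$ by the maps factoring through $\Tt_j$, so it suffices to show that this latter Hom space itself vanishes as soon as $s-j>t$. The key input is the description of the almost split sequences of $\Mm$ given in Theorem \ref{airt}: the middle term $E$ of $0\to T_{(i,t)}\to E\to T_{(i,t-1)}\to 0$ lies in $\add\bigl(\bigoplus_{j<i}T_{(j,t-1)}\oplus\bigoplus_{j>i}T_{(j,t)}\bigr)$, so every arrow out of $T_{(i,t)}$ in the AR-quiver of $\Mm$ ends at some $T_{(j,u)}$ with $u\leq t$. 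By induction, the second index is non-increasing along every path in the AR-quiver starting at $T_{(i,t)}$; since $\Mm$ has finitely many indecomposables, every non-isomorphism between indecomposables decomposes as a finite sum of compositions of irreducible maps, and the absence of a path from $T_{(i,t)}$ to $T_{(k,s-j)}$ with $s-j>t$ forces $\Hom_\Mm(T_{(i,t)},T_{(k,s-j)})=0$.

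The only subtlety I anticipate is bookkeeping: making sure the indexing shift between Proposition \ref{OsamuSteffen2} (with summands labelled $0,\ldots,m$) and the natural labelling $P^{(1)},\ldots,P^{(m)}$ used here is consistent, and that the single inequality $s-t-j\geq 1$ simultaneously places us in the range where $\SS^{-j}P_{(k,s)}$ is a module \emph{and} where the second index strictly increases from $t$ to $s-j$. Modulo this verification, the proof is a formal consequence of Lemmas \ref{space} and \ref{Phiproj} together with the AR-structure of $\Mm$ recorded in Theorem \ref{airt}.
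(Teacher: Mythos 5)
Your skeleton coincides with the paper's: the same decomposition $A=\bigoplus_{t=1}^m\bigl(\bigoplus_{i\in c^{(t)}}P_{(i,t)}\bigr)$, the same appeal to Proposition \ref{OsamuSteffen2} (with the same index shift), the same verification of condition (2) via Lemma \ref{Phiproj}, and the same reduction of condition (1), via Lemmas \ref{space} and \ref{Phiproj}, to showing that $\Hom_\Mm(T_{(i,t)},T_{(k,v)})=0$ whenever $v>t$. Where you diverge is in the proof of this last vanishing: the paper applies $\tau^{1-t}$ to both arguments and uses that $\Hom_{kQ^{(1)}}(DkQ^{(1)},\tau^{p}DkQ^{(1)})=0$ for $p\geq 1$, i.e.\ the directedness of the preinjective part of the ambient hereditary module category, whereas you try to deduce it intrinsically from the AR-quiver of $\Mm$ using only the almost split sequences recorded in Theorem \ref{airt}.

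That step has a genuine gap. The claim ``every arrow out of $T_{(i,t)}$ ends at some $T_{(j,u)}$ with $u\leq t$'' is justified by the sequence $0\to T_{(i,t)}\to E\to T_{(i,t-1)}\to 0$ only when such a sequence exists, i.e.\ for $t\geq 2$; no almost split sequence of $\Mm$ starts at the Ext-injective objects $T_{(i,1)}=e_iDkQ^{(1)}$, and arrows out of these do occur (e.g.\ $I_1\to I_2$ in the example of Section 5). If the target of such an arrow is not Ext-projective you can still get $u=1$ by looking at the almost split sequence \emph{ending} at the target --- a dual argument you do not state --- but if the target is an Ext-projective summand $T_{(k,m_k)}$ with $m_k\geq 2$, then no sequence starts at the source and none ends at the target, so Theorem \ref{airt} gives no control over this arrow. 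Irreducible maps from Ext-injectives to Ext-projectives cannot be excluded on formal grounds (they exist, for instance, in module categories of self-injective algebras), and excluding them here is essentially the Hom-vanishing you are trying to establish, so your induction is circular exactly at its base $t=1$; a path could drop to second index $1$ and then jump up through such an arrow. The fix requires the hereditary input the paper uses: identifying $T_{(k,u)}$ with $\tau^{u-1}(e_kDkQ^{(1)})$, one has $\Hom_{kQ^{(1)}}(e_iDkQ^{(1)},\tau^{u-1}e_kDkQ^{(1)})=0$ for $u\geq 2$ (over a hereditary algebra a quotient of an injective is injective, and $\tau$ of a module has no injective summands; equivalently, maps in the preinjective component point towards the injectives). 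With that fact added your AR-quiver argument closes, but at that point it is no shorter than the paper's direct computation.
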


\begin{proof}
If $i$ is in $c^{(t)}$, then $\tau^{-t+1}(T_{(i,t)})$ is
isomorphic to $e_iD(kQ^{(1)})$.
Now for $t=1,\ldots,m$, we denote by $P_t$ the projective
$A$-module $\bigoplus_{i\in c^{(t)}}P_{(i,t)}$.
For $i$ in
$c^{(t)}$, the indecomposable projective $P_{(i,t)}$ is of the form
$\Mm(-,T_{(i,t)})$. Thus we have
$P_{(i,t)}=\Mm(-,\tau^{t-1}(e_i DkQ^{(1)}))$. Therefore we can write
$$P_t=\Mm(-,\tau^{t-1}(DkQ^{(1)})).$$ Note that if $i$ is not in $c^{(t)}$
then $\tau^{t-1}(e_iDkQ^{(1)})$ is zero, thus the decomposition of $P_t$
into indecomposables is given by $$P_t=\bigoplus_{i\in
  c^{(t)}}\Mm(-,\tau^{t-1}(e_iDkQ^{(1)}))=\bigoplus_{i\in c^{(t)}}\Mm(-,T_{(i,t)}).$$

We want to apply Proposition \ref{OsamuSteffen2} to the decomposition
$A=P_m\oplus\cdots \oplus P_1$. By Lemma \ref{Phiproj}, we know
that for any $0\leq j \leq s-1$, we have $$\SSS^{-j}P_s\simeq \Mm(-,
\tau^{s-1-j}DkQ^{(1)})/[\Tt_{j}]$$ which is a module. Thus we have condition
(b) of Proposition \ref{OsamuSteffen2}.

For $s-j-t\geq 1$ we have isomorphisms
$$\begin{array}{rcll}
\Hom_{A}(P_t,\SSS^{-j}P_s) & = &
\Hom_{A}(\Mm(-,\tau^{t-1}(DkQ^{(1)})),\SSS^{-j}(\Mm(-,\tau^{s-1}(DkQ^{(1)}))))& \\
& = &
\Hom_{A}(\Mm(-,\tau^{t-1}(DkQ^{(1)})),\Mm(-,\tau^{s-1-j}(DkQ^{(1)}))/[\Tt_{j}]) & \textrm{ by Lemma \ref{Phiproj}}\\
 & \simeq & \Mm(\tau^{t-1}(DkQ^{(1)}),\tau^{s-j-1}(DkQ^{(1)}))/[\Tt_{j}] &\textrm{ by Lemma \ref{space}} \\ 
& \simeq & \Mm(DkQ^{(1)},
\tau^{s-j-t}(DkQ^{(1)}))/\add(\tau^{1-t}T\oplus \cdots\oplus
\tau^{2-t-j}(T))&\end{array}$$
Since $s-j-t\geq 1$ the space
$\Mm(DkQ^{(1)},\tau^{s-j-t}DkQ^{(1)})$ vanishes. Hence we have condition (a) of
Proposition \ref{OsamuSteffen2}. Therefore the complex
$$M=\bigoplus_{t=1}^m\SSS^{-t+1}(P_t)=\bigoplus_{t=1}^m\bigoplus_{i\in
  c^{(t)}}\SSS^{-t+1}(P_{(i,t)})$$ is a tilting module. 

\end{proof}

\section{Computation of the endomorphism algebra}

In this section, we prove Theorem  \ref{derivedeq}(2), that is that the endomorphism algebra $\End_A(M)$ is isomorphic to the algebra $\Gamma$ defined in section 2.1.
The strategy consists of describing these two algebras with a quiver and an ideal of relations.

Let $\ww=c^{(m)}\ldots c^{(0)}$ be a co-$c$-sortable word, and define $\ww'=c^{(m)}\ldots c^{(1)}$.  
Let $R_{\ww'}$ be the following quiver:
\begin{itemize}
\item its vertices are $(i,t)$ where $i$ is in $c^{(t)}$;
\item for $i\in Q_0^{(1)}$, for $t\geq 1$ such that $i$ is in
  $c^{(t+1)}$, we have an arrow $q^i_t:(i,t)\rightarrow (i,t+1)$;
\item for any $a:i\rightarrow j$ in $Q_1^{(1)}$ such that $i,j\in c^{(t)}$ we have an arrow
  $a_t:(i,t)\rightarrow (j,t)$.
\end{itemize}

We define an ideal $\Jj_{\ww'}$ of relations on the path algebra $kR_{\ww'}$
generated by commutative squares
$$\xymatrix{(i,t)\ar[r]^{q^i_t}\ar[d]^{a_t} &
  (i,t+1)\ar[d]^{a_{t+1}}\\ (j,t)\ar[r]^{q^j_t} & (j,t+1)}$$ when all
these arrows are defined, and by zero relations
$$\xymatrix{(i,t)\ar[r]^{a^i_t} & (j,t)\ar[r]^{q^j_t} & (j,t+1)}$$
when $i$ is not in $c^{(t+1)}$.

\begin{lema}\label{iso1}
The algebra $\Gamma$ is isomorphic to the algebra
$kR_{\ww'}/\Jj_{\ww'}$.
\end{lema}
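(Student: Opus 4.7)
The plan is to compute the degree-zero part of the graded Jacobian algebra $B=Jac(\bar{Q}_w,W_w)$, which by construction is $\Gamma$, and identify it directly with $kR_{w'}/\Jj_{w'}$. I proceed in three steps: identify the degree-zero subquiver of $\bar{Q}_w$ with $R_{w'}$; reduce to the degree-zero relations coming from $Q^{*}$-arrows; then compute these relations and match them with $\Jj_{w'}$.

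For the first step, the vertices of $\bar{Q}_w$ correspond bijectively to pairs $(i,t)$ with $i\in c^{(t)}$ and $1\leq t\leq m_i$: the excluded positions $l_i$ are precisely the positions of $s_i$ in $c^{(0)}$, since every support is contained in $\mathrm{supp}(c^{(0)})$. The degree-zero arrows of $\bar{Q}_w$ are the left-arrows and the $Q$-arrows. The left-arrows give the $q^i_t$ directly. For a $Q$-arrow coming from an edge $a:i\to j$ of $Q$, the hypothesis $(**)$ forces $i<j$; inspecting the word order (in each layer $c^{(t)}$ positions are ordered by vertex index, and lower layers lie strictly to the right in $w$) shows that the target of the $Q$-arrow from $(i,t)$ is $(j,t)$ if $j\in c^{(t)}$ and no such target exists otherwise. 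Thus $(k\bar{Q}_w)_0\simeq kR_{w'}$.

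For the second step, since $W_w$ is homogeneous of degree $1$, $\partial_a W_w$ has degree $1-\deg(a)$; a degree count then shows that the degree-zero part $I_0$ of the Jacobian ideal is the ideal in $kR_{w'}$ generated by $\partial_{a^{*}}W_w$ as $a^{*}$ runs over the $Q^{*}$-arrows, so $\Gamma=B_0=kR_{w'}/I_0$ and the lemma reduces to $I_0=\Jj_{w'}$. For the third step, an analogous word-order analysis identifies the $Q^{*}$-arrows with $a_{t+1}^{*}:(j,t+1)\to(i,t)$, one for each edge $a:i\to j$ of $Q$ with $j\in c^{(t+1)}$ and $i\in c^{(t)}$. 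The only cycles in $W_w$ containing such an $a_{t+1}^{*}$ are $W_{a_t}=a_t\,a_{t+1}^{*}\,q^j_t$ (always present here) and $W_{a_{t+1}^{*}}=a_{t+1}^{*}\,a_{t+1}\,q^i_t$ (present only when $i\in c^{(t+1)}$). The cyclic derivative yields
$$\partial_{a_{t+1}^{*}}W_w=q^j_t\,a_t-a_{t+1}\,q^i_t,$$
with the convention that a monomial involving an absent arrow is zero. When $i\in c^{(t+1)}$ this is precisely the commutative square relation of $\Jj_{w'}$; when $i\notin c^{(t+1)}$ it degenerates to the zero relation $q^j_t\,a_t=0$ of $\Jj_{w'}$. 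Every generator of $\Jj_{w'}$ arises in exactly one of these two ways, so $I_0=\Jj_{w'}$.

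The main obstacle will be the bookkeeping of which constituent arrows exist in $\bar{Q}_w$: one must verify that the case split on whether $i\in c^{(t+1)}$ accounts exactly for the two families of generators of $\Jj_{w'}$, with no missing or spurious relations, and that the sign and composition conventions in the cyclic derivative match the orientation of the square.
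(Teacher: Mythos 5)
Your proposal is correct and follows essentially the same route as the paper's proof: the paper likewise identifies the subquiver of degree-zero arrows of $\bar{Q}_w$ with $R_{w'}$ and observes that the cyclic derivatives $\partial_{a^*}W_w$ with respect to the $Q^*$-arrows are exactly the generators of $\Jj_{w'}$, leaving these checks (which you carry out explicitly, together with the nonnegative-degree count giving $B_0=kR_{w'}/I_0$) as routine.
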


\begin{proof}
In the case where $\ww$ is co-$c$-sortable, the quivers $Q_\ww$ and $Q'_\ww$ described in section 1 are much simpler.  The orientation of $Q$ satisfies condition $(*)$ of section 2.1 if and only if it satisfies $(**)$ of section 2.2 (cf Remark \ref{remark}(3)).
It is routine to check that if we remove the $Q^*$-arrows of $Q'_\ww$ we get $R_{\ww'}$, and that 
the partial derivatives $\partial_{a^*}W_\ww$ where $a^*$ is a $Q^*$-arrow are exactly the relations generating $\Jj_{\ww'}$. 
\end{proof}

\begin{prop}\label{iso}
There exists an algebra isomorphism $$\bar{G}:kR_{\ww'}/\Jj_{\ww'}\rightarrow
\End_{A}(\bigoplus_{t=1}^m\bigoplus_{i\in
  c^{(t)}}\SSS^{-t+1}(P_{(i,t)}))=\End_A(M).$$
\end{prop}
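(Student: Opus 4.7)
The plan is to mirror the proof of Lemma \ref{iso1}: describe $\End_A(M)$ by a quiver with relations and exhibit the natural isomorphism with $kR_{w'}/\Jj_{w'}$. By Lemma \ref{Phiproj}, together with the identity $T_{(i,t)} = \tau^{t-1}I_i$ where $I_i = e_i DkQ^{(1)}$, each indecomposable summand $\SS^{-t+1}(P_{(i,t)})$ of $M$ is isomorphic to the $\Mm$-module $X_{(i,t)} := \Mm(-, I_i)/[\Tt_{t-1}]$. I would first define $G$ on arrows. Since $\Tt_{t-1} \subseteq \Tt_t$, I take $G(q^i_t)$ to be the canonical surjection $X_{(i,t)}\twoheadrightarrow X_{(i,t+1)}=\Mm(-, I_i)/[\Tt_t]$. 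For $a: i\to j$ in $Q_1^{(1)}$ with $i, j \in c^{(t)}$, I take $G(a_t): X_{(i,t)}\to X_{(j,t)}$ to be the map induced by the natural $kQ^{(1)}$-morphism between injectives $\tilde a:I_i\to I_j$ attached to $a$.

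Next, I check that $G$ factors through $\Jj_{w'}$. The commutative squares are automatic: both $G(a_{t+1})G(q^i_t)$ and $G(q^j_t)G(a_t)$ are computed as ``apply $\tilde a: I_i \to I_j$, then project modulo $[\Tt_t]$'', so they coincide. For the zero relations, when $i\notin c^{(t+1)}$ we have $m_i\leq t$, so $I_i = \tau^{-(m_i-1)}T_{(i,m_i)}$ is a summand of $\tau^{-(m_i-1)}T$ and hence lies in $\Tt_{m_i} \subseteq \Tt_t$. Thus $\tilde a$ factors trivially through $I_i\in\Tt_t$, and the morphism $\Mm(-, I_i) \to \Mm(-, I_j)/[\Tt_t]$ induced by $a$ is zero, giving $G(q^j_t a_t)=0$. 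Hence $G$ descends to a ring homomorphism $G:kR_{w'}/\Jj_{w'}\to \End_A(M)$.

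Finally I would show $G$ is bijective, which is the main obstacle. Using Lemma \ref{space}, each Hom space $\Hom_A(X_{(i,t)}, X_{(j,s)})$ can be computed as commutative squares of minimal $\Tt_{t-1}$- and $\Tt_{s-1}$-approximations of $I_i$ and $I_j$ modulo homotopy. The strategy is to exhibit a basis of this space in bijection with the paths from $(i,t)$ to $(j,s)$ in $R_{w'}/\Jj_{w'}$. The underlying content is that every morphism $I_i\to I_j$ in $\Mm$, viewed modulo those factoring through $\Tt_{s-1}$, decomposes into alternating compositions of $Q^{(1)}$-arrows (giving the $G(a_t)$'s) and $\tau^{-1}$-steps along the AR-sequences of $\Mm$ described in Section 2 (giving the $G(q^i_t)$'s). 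Co-$c$-sortability enters crucially here: it guarantees that all ``excess'' paths in $R_{w'}$ are either killed by the zero relations (precisely because $I_i\in\Tt_t$ when $i\notin c^{(t+1)}$) or identified with shorter ones via the commutative squares, so the dimensions on both sides match and $G$ is an isomorphism. Combined with Lemma \ref{iso1}, this yields $\End_A(M)\simeq\Gamma$.
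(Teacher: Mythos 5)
Your Step~1 (definition of $G$ and the fact that it kills $\Jj_{w'}$) is correct and essentially the paper's own argument, just phrased with the quotient modules $\Mm(-,I_i)/[\Tt_{t-1}]$ instead of commutative squares: the identification $\SS^{-t+1}P_{(i,t)}\simeq \Mm(-,I_i)/[\Tt_{t-1}]$ via Lemma \ref{Phiproj}, the canonical projections for the $q^i_t$, the maps induced by $\tilde a\colon I_i\to I_j$ for the $a_t$, and the observation that $i\notin c^{(t+1)}$ forces $I_i\in\Tt_t$ (the paper's Lemma \ref{useful}) so that the zero relations hold.

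The genuine gap is that you never prove bijectivity, which is the substance of the proposition. For surjectivity you would have to show that \emph{every} morphism between summands of $M$ is a composite of the chosen generators; the paper does this by describing such a morphism, via Lemma \ref{space}, as a commutative square whose right-hand vertical map is a linear combination of paths $\alpha$ in $Q^{(1)}$, introducing the invariant $\mathrm{size}(S)=s-t+\mathrm{length}(\alpha)$, proving that squares of size $<0$ vanish, that size $0$ squares are isomorphisms (forcing $s=t$), that the nonzero size $1$ squares are exactly the $U^i_t$ and $S(a)_t$, and that any square of size $\geq 2$ factors as a composition of strictly smaller ones. None of this (nor any substitute, such as an independent dimension count of both algebras) appears in your sketch; the phrase ``the dimensions on both sides match'' is an assertion, not an argument, and computing $\dim_k kR_{w'}/\Jj_{w'}$ independently is itself nontrivial. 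For injectivity you would need to take a nonzero class in $kR_{w'}/\Jj_{w'}$, put it in the normal form $\sum_u\lambda_u\, q^i_t q^i_{t+1}\cdots q^i_{s-1} a^n_s\cdots a^1_s$ with all intermediate vertices of the underlying path $\alpha_u$ lying in $c^{(s)}$, and show that a null-homotopy of its image forces each $\alpha_u$ to factor through some $e_{i_{l(u)}}DkQ^{(1)}$ belonging to $\Tt_{s-1}$, contradicting $i_{l(u)}\in c^{(s)}$ by Lemma \ref{useful}; your appeal to ``co-$c$-sortability enters crucially here'' names the right ingredient but supplies no such argument (and you also never verify that the generators $G(q^i_t)$, $G(a_t)$ are themselves nonzero, which injectivity requires). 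As written, the proposal establishes only that $G$ is a well-defined algebra homomorphism, not that it is an isomorphism.
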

\begin{proof}
We divide the proof in several steps.

\medskip
\noindent
\textit{Step 1: Construction of $\bar{G}$.}
\smallskip

We first define $G:kR_{\ww'}\to \End_A(M)$ on the vertices of $R_{\ww'}$. For $i$ in $c^{(t)}$ we put 
$$ G(i,t)=\SSS^{-t+1}(P_{(i,t)})=\SSS^{-t+1}(\Mm(-,\tau^{t-1}(e_i DkQ^{(1)})).$$

Let $s,t$ be integers $\leq m$, $i\in c^{(t)}$ and $j\in c^{(s)}$. By Lemma \ref{Phiproj}, we have an isomorphism
$$\Hom_{A}(\SSS^{-t+1}(P_{(i,t)}),\SSS^{-s+1}(P_{(j,s)})\simeq \Hom_{A}(\Mm(-,e_i(DkQ^{(1)}))/[\Tt_{t-1}],\Mm(-,e_j(DkQ^{(1)}))/[\Tt_{s-1}]).$$  By Lemma \ref{space}, the above $\Hom$-space  
is isomorphic to the space of commutative squares
$$\xymatrix{T_{t-1}\ar[r]\ar[d] & e_i(DkQ^{(1)})\ar[d] \\ T_{s-1}\ar[r] &
  e_j(DkQ^{(1)})}$$ up to homotopy, where horizontal maps are minimal right $\Tt_{t-1}$
(resp. $\Tt_{s-1}$)-approximations.

Hence to define a morphism $G:kR_{\ww'}\rightarrow \End_A(M)$ we have to construct  for any arrow $q^i_t:(i,t)\rightarrow(i,t+1)$ a commutative square $$U^i_t:=\xymatrix{T_{t-1}\ar[r]^-f\ar[d] & e_i(DkQ^{(1)})\ar@{=}[d] \\ T_{t}\ar[r]^-g &
  e_i(DkQ^{(1)})},$$
 and for any arrow $a_t:(i,t)\rightarrow (j,t)$ a commutative square
 $$S(a)_t:=\xymatrix{T_{t-1}\ar[r]^-f\ar[d] & e_i(DkQ^{(1)})\ar[d]^a \\ T'_{t-1}\ar[r]^-g &
  e_j(DkQ^{(1)})}.$$
   
Here is an immediate result which will often be used in the proof.

\begin{lema}\label{useful}
For $t\geq 1$ we have equivalences
$$i\notin c^{(t+1)} \ \Leftrightarrow \ \tau^t(e_i DkQ^{(1)})=0\ \Leftrightarrow\  e_i DkQ^{(1)}\in \Tt_t.$$
\end{lema}

Let $i$ be in $Q_0^{(1)}$ and $t$ be an integer $\geq 1$. Let $f:T_{t-1}\rightarrow e_i(DkQ^{(1)})$ be a minimal
right $\Tt_{t-1}$-approximation and let $g:T_{t}\rightarrow e_i (DkQ^{(1)})$ be a
minimal right $\Tt_{t}$-approximation. Since we have the inclusion
$\Tt_{t-1}\subset\Tt_{t}$, then there exists a commutative square 

$$U^i_t:=\xymatrix{T_{t-1}\ar[r]^-f\ar[d] & e_i(DkQ^{(1)})\ar@{=}[d] \\ T_{t}\ar[r]^-g &
  e_i(DkQ^{(1)})}.$$

It is homotopic to zero if and only if $e_i(DkQ^{(1)})$ is in
$\Tt_{t}$. This is equivalent to the fact that $i$ is not in $c^{(t+1)}$ by Lemma \ref{useful}.
Thus for $i$ in $c^{(t+1)}$ we define $$G(q_t^i)=U^i_t\neq 0.$$

Let $a:i\rightarrow j$ be an arrow in $Q_1^{(1)}$ and $t$ be an integer
$\geq 1$. Let $f:T_{t-1}\rightarrow e_i(DkQ^{(1)})$ and $g:T'_{t-1}\rightarrow
e_j(DkQ^{(1)})$ be minimal right $\Tt_{t-1}$-approximations. Then we have a
commutative square 
$$S(a)_t:=\xymatrix{T_{t-1}\ar[r]^-f\ar[d] & e_i(DkQ^{(1)})\ar[d]^a \\ T'_{t-1}\ar[r]^-g &
  e_j(DkQ^{(1)})}$$

If this square is homotopic to zero then there exists a map
$h:e_i(DkQ^{(1)})\rightarrow T'_{t-1}$ such that $a=g\circ h$. Since $a$ is an
irreducible map, $h$ is a section or $g$ is a retraction. Thus
$e_i(DkQ^{(1)})$ or $e_j(DkQ^{(1)})$ are in $\Tt_{t-1}$. By Lemma \ref{useful}, this means that either $i\notin c^{(t)}$ or $j\notin c^{(t)}$. 

Therefore for any $a:i\rightarrow j $ in $Q_1$ and for $t\geq 1$ such that $i,j\in c^{(t)}$, we put $$G(a_t)=S(a)_t\neq 0.$$

Now it remains to check that the map $G:kR_{\ww'}\rightarrow \End_A(M)$ vanishes on $\Jj_{\ww'}$.
For any $i,j$ in $c^{(t)}$ we have a commutative diagram:
$$\xymatrix@R=.3pc{T_{t-1}\ar[rr]\ar[dd]\ar[dr] && e_i(DkQ^{(1)})\ar[dr]\ar@{=}[dd] & \\
& T'_{t-1}\ar[rr]\ar[dd] && e_j(DkQ^{(1)})\ar@{=}[dd]\\
T_{t}\ar[rr]\ar[dr] && e_i(DkQ^{(1)})\ar[dr] & \\ & T'_{t}\ar[rr] && e_j
(DkQ^{(1)})}$$
This implies that in $\End_{A}(M)$ we have the relation
$U^j_t\circ S(a)_{t} =S(a)_{t+1}\circ U^i_t$ if $S(a)_{t+1}$ is not zero,
that is if $i$ and $j$ are in $c^{(t+1)}$. Moreover we have the relation
$U^j_t\circ S(a)_{t}=0$ if $i$ is not in $c^{(t+1)}$.

Therefore the morphism $G:kR_{\ww'}\rightarrow \End_A(M)$ factors through morphism
$\bar{G}:kR_{\ww'}/\Jj_{\ww'}\rightarrow \End_A(M).$
 
\bigskip
\noindent
\textit{Step 2: The map $\bar{G}$ is surjective.}

\smallskip
We will show that the squares of the form $S(a)_t$ and $U^i_t$ generate the algebra $\End_A(M)$.

Let $\alpha$ be a path in $Q^{(1)}$ from $i$ to $j$. We denote by $length(\alpha)$ its length. For a commutative square 
$$S:=\xymatrix{T_{t-1}\ar[r]\ar[d] & e_i(DkQ^{(1)})\ar[d]^\alpha \\ T_{s-1}\ar[r] &
  e_j(DkQ^{(1)})}$$
 let us define the size of $S$ by $$size(S)=s-t+length(\alpha).$$

For all integers $t\geq 1$, all $i$ in $Q^{(1)}_0$ and all $a$ in $Q^{(1)}_1$, we have
$size(U_t^i)=size(S(a)_t)=1.$
\smallskip

We first show that the only non zero squares $S$ with $size(S)\leq 0$ are the isomorphism and then $size(S)=0$.

Let $s\leq t$ be two integers, $i$ be in $c^{(t)}$ and $j$ be in
$c^{(s)}$. Suppose there is a commutative square 

$$S:=\xymatrix{T_{t-1}\ar[r]^-{f_t}\ar[d]^u & e_i(DkQ^{(1)})\ar[d]^\alpha \\ T_{s-1}\ar[r]^-{g_s} &
  e_j(DkQ^{(1)})}$$
where $\alpha$ is non zero path, and where $f_t$ (resp. $g_s$) is a minimal right $\Tt_{t-1}$ (resp. $\Tt_{s-1}$)-approximation. 
Since $s\leq t$, we have $\Tt_{s-1}\subset\Tt_{t-1}$. The approximation $f_t$ is not zero, hence $u$ is not zero and $T_{t-1}$ must be
in $\Tt_{s-1}$. Let $f_s:T'_{s-1}\rightarrow e_i(DkQ^{(1)})$ be a minimal
right $\Tt_{s-1}$-approximation, then we have such a factorization:
$$\xymatrix{T_{t-1}\ar[r]^-{f_t}\ar@/^/[d] & e_i(DkQ^{(1)})\ar@{=}[d] \\ T'_{s-1}\ar[r]^-{f_s}\ar@/^/[u] &
  e_i(DkQ^{(1)})}$$
The fact that the maps $f_t$ and $f_s$ gives an isomorphism between $\Mm(-,e_i(DkQ^{(1)}))/[\Tt_{s-1}]$ and
$\Mm(-,e_i(DkQ^{(1)}))/[\Tt_{t-1}]$, thus we have $s=t$.

Finally we get that all squares of size $<0$ are zero. Moreover, all
squares of size $0$ are isomorphisms and all squares of size 1 which are not homotopic to zero
are the $S(a)_t$ and $U^i_t$.
\smallskip

Now we will show that any square $S$ such that $size(S)\geq 2$ can be written as a composition of squares of size strictly smaller.
Let $s\geq t$ be positive integers, $\alpha\neq 0$ be a path from $i$ to $j$ in $Q^{(1)}$. Let $S$ be a non zero
commutative square non homotopic to zero with $size (S)\geq 2$:
$$\xymatrix{T_{t-1}\ar[r]^-{f_t}\ar[d]^u & e_i(DkQ^{(1)})\ar[d]^\alpha \\ T_{s-1}\ar[r]^-{g_s} &
  e_j(DkQ^{(1)})}$$
where $f_t$ (resp. $g_s$) is a minimal right $\Tt_{t-1}$ (resp. $\Tt_{s-1}$)-approximation.
Assume that $s\geq t+1$. Then we have a commutative diagram

$$\xymatrix@R=.5pc{&T_{t-1}\ar[dl]\ar[rr]^-{f_t}\ar[dd] & &e_i(DkQ^{(1)})\ar[dd]^\alpha\ar@{=}[dl] 
  \\ T_{t}\ar[dr]\ar[rr]^(.6){f_{t+1}} &&e_i(DkQ^{(1)})\ar[dr]&\\& T_{s-1}\ar[rr]^-{g_s} &&
  e_j(DkQ^{(1)})}$$
where $f_{t+1}$ is a minimal right $\Tt_t$ approximation.
Thus the square $S$ is the composition $B\circ U_t^i$ where $$B:=\xymatrix{ T_{t}\ar[r]^-{f_{t+1}}\ar[d] & e_i(DkQ^{(1)})\ar[d]^\alpha \\ T_{s-1}\ar[r]^-{g_s} &
  e_j(DkQ^{(1)})}.$$
We have
$size(B)=s-t-1+length(\alpha)$ and $size(U^i_t)=1$.

If $s=t$ and if $\alpha$ is a composition of arrows
$a_1\circ\cdots\circ a_n$ of $Q_1^{(1)}$ with $n\geq 2$, then we have $S=S(a_1)_t\circ B$
where $size(B)=length(\alpha)-1$. Therefore $(U^i_t,S(a)_t)$ generate the
algebra $\End(\bigoplus_{t}\bigoplus_{i\in
  c^{(t)}}\SSS^{-t+1}(P_{(i,t)})$ and the morphism $\bar{G}$ is surjective.

\bigskip
\noindent
\textit{Step 3: The map $\bar{G}$ is injective.}

\smallskip
Let $x$ be a linear combination of paths from $(i,t)$ to $(j,s)$ in $R_{\ww'}$ which is non zero in $kR_{\ww'}/\Jj_{\ww'}$. Then we have $s\geq t$, $i\in c^{(t)}$ and $j\in c^{(s)}$. The element $x$ can be written as a sum $\sum_ux_u$ where for each $u$ there is a path
$$\xymatrix{\alpha_u:=i=i_1\ar[r]^(.7){a^1} & i_2\ar[r]^{a^2} & \cdots \ar[r]^{a^{n-1}} & i_n\ar[r]^{a^n} & j}$$ in $Q^{(1)}$ such that $i_l\in c^{(s)}$ for $l=1,\ldots, n$ and
$$x_u= \lambda_u q^i_t q^i_{t+1} \ldots q^i_{s-1}a^n_sa^{n-1}_s\ldots a^1_s $$ where $\lambda_u$ is in the field $k$.

Now assume that $G(x)$
is a
commutative square homotopic to zero    
$$\xymatrix{T_{t-1}\ar[r]\ar[d] & e_i(DkQ^{(1)})\ar@{..>}[dl]^h\ar[d]^\alpha \\ T_{s-1}\ar[r] &
  e_j(DkQ^{(1)})}$$
where $\alpha=\sum_{u}\lambda_u \alpha_u$ and
where horizontal maps are minimal right $\Tt_{t-1}$ and $\Tt_{s-1}$-approximations.
Since $s\geq t$, we have a facorization:

$$\xymatrix@-.5pc{&T_{t-1}\ar[dl]\ar[rr]\ar[dd] & &e_i(DkQ^{(1)})\ar@{..>}@/_2pc/[ddll]_h\ar[dd]^\alpha\ar@{=}[dl] 
  \\ T'_{s-1}\ar[dr]\ar[rr] &&e_i(DkQ^{(1)})\ar[dr]&\\& T_{s-1}\ar[rr] &&
  e_j(DkQ^{(1)})}$$
Thus the square  
$$\xymatrix{T'_{s-1}\ar[r]\ar[d] & e_i(DkQ^{(1)})\ar@{..>}[dl]^h\ar[d]^\alpha \\ T_{s-1}\ar[r] &
  e_j(DkQ^{(1)})}$$ is homotopic to zero.

Therefore for all $u$ there exists a factorization
$$\xymatrix{\alpha_u:e_i(DkQ^{(1)})\ar[r]^{\beta_u} &
  e_{i_{l(u)}}(DkQ^{(1)})\ar[r]^{\gamma_u} & e_j(DkQ^{(1)})}$$ with $e_{i_{l(u)}}$ in
$\Tt_{s-1}$. Thus by Lemma \ref{useful} $i_{l(u)}$ is not in $c^{(s)}$ for all $u$. This is a contradiction. Therefore the morphism $\bar{G}$ is injective.

\end{proof}

\begin{proof}[Proof of Theorem~\ref{derivedeq}.] 
Combining Proposition \ref{tilting} with Lemma \ref{iso1} and Proposition \ref{iso}, we get that $M$ is a tilting module over $A$ and that $\End_A(M)\simeq\Gamma$. Therefore by Theorem 1.6 of \cite{Hap2} we have a derived equivalence $$\xymatrix{R\Hom_A(M,-):\Dd^b(A)\ar[r]^(.7)\sim & \Dd^b(\Gamma)}.$$ We still have to prove that the diagram
$$\xymatrix{\Dd^b(A)\ar[d]_{\pi_A}\ar[rr]^{R\Hom_A(M,-)}\ar[rd]_{F_A} &  & \Dd^b(\Gamma)\ar[ld]^{F_\Gamma}\\ 
 \Cc_A\ar[r]& \underline{\Sub}\Lambda_w &}$$ commutes.
The tilting $A$-module $M$ is sent to $\Gamma$ by the functor $R\Hom_A(M,-)$, and thus to the cluster-tilting object $\underline{C}_\ww$ in $\underline{\Sub} \Lambda_w$ by Theorem \ref{art}.
By definition of the generalized cluster category, the objects $\pi_A(\SSS (X))$ and $\pi_A(X)$ are isomorphic in the category $\Cc_A$, therefore we have an isomorphism in $\Cc_A$
$$\pi_A(M)\simeq \pi_A(A).$$  Hence by Theorem \ref{airt2} the object $M$ is sent to the cluster-tilting object $\underline{C}_\ww$ in $\underline{\Sub} \Lambda_w$. The triangle functors $\Dd^b(A)\rightarrow \underline{\Sub}\Lambda_w$ and $\Dd^b(\Gamma)\rightarrow \underline{\Sub}\Lambda_w$ are given by tensor products (see \cite{Ami3} and \cite{ART}). We can now conclude using the fact that two triangle functors which are tensor products and which coincide on a tilting object are isomorphic. Therefore the diagram above is commutative. And we finish the proof of Theorem \ref{derivedeq}.
 \end{proof}
 
\section{Example}

Let $Q$ be the quiver $\xymatrix@-1.3pc{ & 2\ar[dr] & \\ 1\ar[ur]\ar[rr] &
  & 3 }$ and $\ww:=s_3s_2s_3s_1s_2s_3s_1s_2s_3$. The word $\ww$  is co-$c$-sortable with $c=s_1s_2s_3$ and we have $c^{(0)}=c^{(1)}=s_1s_2s_3$, $c^{(2)}=s_2s_3$, $c^{(3)}=s_3$, $\ww'=s_3s_2s_3s_1s_2s_3$ and 
the quiver $Q^{(1)}$ is $Q$. It satisfies the orientation conditions $(*)$ and $(**)$.
The endomorphism algebra in $\underline{\Sub}\Lambda_w$ of the standard cluster-tilting object $\underline{C}_\ww$ of Theorem\ref{birsc} is the Jacobian algebra $\Jac(Q'_\ww,W_\ww)$ (Theorem \ref{birs}) where 
$$\xymatrix{&&2\ar@{<-}[rrr]^p\ar[dr]^c\ar@(d,l)[ddrr]_b &&& 5\ar[dr]^h & \\ Q'_\ww:=& 1\ar@{<-}[rr]_(.4)q\ar[ur]^a && 3\ar@{<-}[rrr]^(.7)r\ar[urr]^e\ar[dr]^d &&&
  6\\ &&&& 4\ar[uur]^(.6)f\ar[urr]_g &&}$$
and $W_\ww:= gdr + her+fbp-ecp+caq.$ The arrows $\{c,f,g,h\}$ are the $Q$-arrows, $\{a,b,d,e\}$ are the $Q^*$-arrows, and $\{p,q,r\}$ are the arrows going to the left.

Then $T$ is the module $$T= {\bsm 3 &&&\\&2&&3\\&&1&\esm}\oplus
{\bsm &&&&&&3\\&&&3&&2&\\3&&2&&1&&\\ &1&&&&&\esm}\oplus {\bsm
  3\\1\esm}=I_1\oplus\tau_{kQ^{(1)}}(I_2)\oplus T_3$$
The torsion class $\Mm=\Fac (T)$ has finitely many indecomposables, namely we have
$$\Mm=\{{\bsm 3\esm}, {\bsm 3\\2\esm}, {\bsm 3 &&&\\&2&&3\\&&1&\esm}, {\bsm &&&&3\\&3&&2& \\2&&1&&\esm },
{\bsm &&&&&&3\\&&&3&&2&\\3&&2&&1&&\\ &1&&&&&\esm}, {\bsm
  3\\1\esm}\}=\{I_1,I_2,I_3,\tau_{kQ}(I_3),\tau_{kQ}(I_2),T_3\}$$

The Auslander-Reiten quiver of $\Mm$ is
$$\xymatrix@-.4cm{&\tau_{kQ}(I_2)\ar@{..}[rrr]\ar[dr]\ar@(d,l)[ddrr] &&& I_2\ar[dr] & \\ T_3\ar@{..}[rr]\ar[ur] && \tau_{kQ}(I_3)\ar@{..}[rrr]\ar[urr]\ar[dr] &&&
  I_3\\ &&& I_1\ar[uur]\ar[urr] &&}$$
Therefore the algebra $A$ is given by the quiver
$$\xymatrix@-.3cm{&2\ar@{..}[rrr]\ar[dr]^c\ar@(d,l)[ddrr]_b &&& 5\ar[dr]^h & \\ 1\ar@{..}[rr]\ar[ur]^a && 3\ar@{..}[rrr]\ar[urr]^e\ar[dr]^d &&&
  6\\ &&& 4\ar[uur]^(.6)f\ar[urr]^g &&}$$
with the relations $he-gd=0$, $fb-ec=0$ and $ca=0$.

$$\SSS^{-2}(P_1)={\bsm 6 \esm},\quad  \SSS^{-1}(P_2)={\bsm 5\\3\esm}\quad \textrm{and}\quad  \SSS^{-1}(P_3)={\bsm 6\\5\esm}.$$
  
We easily check that the $A$-module $M$ of Theorem \ref{derivedeq} is $$M:= \SSS^{-2}(P_1)\oplus \SSS^{-1}(P_2\oplus P_3)\oplus (P_4\oplus P_5\oplus P_6)\simeq {\bsm 6 \esm}\oplus{\bsm 5\\3\esm}\oplus{\bsm 6\\5\esm}\oplus {\bsm
  &&4&&\\&3&&2&\\2&&&&1\esm}\oplus {\bsm & 5
  &&&\\3&&4&&\\&2&&3&\\&&&&2\esm}\oplus {\bsm
  &&&&6&&&\\&&&5&&4&&\\&&4&&3&&2&\\&3&&2&&&&1\\2&&&&&&&\esm}.$$
The endomorphism algebra $\End_A(M)$ is given by the quiver
 $$\xymatrix@-.3cm{&\SSS^{-1}P_2\ar@{<-}[rrr]^p\ar[dr]^c\ar@{..}@(d,l)[ddrr] &&&  P_5\ar[dr]^h & \\ \SSS^{-2}P_1\ar@{<-}[rr]_q\ar@{..}[ur] 
&&\SSS^{-1}P_3\ar@{<-}[rrr]^(.7)r\ar@{..}[urr]\ar@{..}[dr] &&&
 P_6\\ &&&P_4 \ar[uur]^(.6)f\ar[urr]_g &&}$$
with relations $rh-cp=0$, $rg=0$, $qc=0$ and $pf=0$. It is isomorphic to the algebra $\Gamma_\ww$ defined in section 2.1.

\end{document}